\newtheorem{theorem}{Theorem}[section]
\newtheorem{lemma}[theorem]{Lemma}
\newtheorem{proposition}[theorem]{Proposition}
\newtheorem{corollary}[theorem]{Corollary}
\newtheorem{conjecture}[theorem]{Conjecture}
\newtheorem{definition}[theorem]{Definition}
\newtheorem{remark}[theorem]{Remark}
\newtheorem{example}[theorem]{Example}
\newcommand{\ma}{\mathcal}
\newcommand{\s}{\subseteq}
\newcommand{\fr}{\frac}
\newcommand{\lc}{\lceil}
\newcommand{\rc}{\rceil}
\begin{document}
\title{Some intriguing upper bounds for separating hash families}

\author{Gennian Ge$^{\text{a,}}$\thanks{Corresponding author. Email address: gnge@zju.edu.cn. Research supported by the National Natural Science Foundation of China under Grant Nos. 11431003 and 61571310, Beijing Scholars Program,  Beijing Hundreds of Leading Talents Training Project of Science and Technology, and Beijing Municipal Natural Science Foundation.},
Chong Shangguan$^{\text{b,}}$\thanks{Email address: theoreming@163.com.},
Xin Wang$^{\text{c,}}$\thanks{Email address: xinw@suda.edu.cn. Research supported in part by the Post-Doctoral Science Foundation of China under Grant 2018M632356 and in part by the National Natural Science Foundation of China under Grant No. 11801392.}\\
\footnotesize $^{\text{a}}$ School of Mathematical Sciences, Capital Normal University, Beijing 100048, China.\\
\footnotesize $^{\text{b}}$ School of Mathematical Sciences, Zhejiang University, Hangzhou 310027, Zhejiang, China.\\
\footnotesize $^{\text{c}}$ Department of Mathematics, Soochow University, Suzhou 215006, China.\\
}
\date{}
\maketitle

\begin{abstract}

    An $N\times n$ matrix on $q$ symbols is called $\{w_1,\ldots,w_t\}$-separating if for arbitrary $t$ pairwise disjoint column sets $C_1,\ldots,C_t$ with $|C_i|=w_i$ for $1\le i\le t$,
    there exists a row $f$ such that $f(C_1),\ldots,f(C_t)$ are also pairwise disjoint, where $f(C_i)$ denotes the collection of components of $C_i$ restricted to row $f$.
    Given integers $N,q$ and $w_1,\ldots,w_t$, denote by $C(N,q,\{w_1,\ldots,w_t\})$ the maximal $n$ such that a corresponding matrix does exist.
    The determination of $C(N,q,\{w_1,\ldots,w_t\})$ has received remarkable attentions during the recent years.
    The main purpose of this paper is to introduce two novel methodologies to attack the upper bound of $C(N,q,\{w_1,\ldots,w_t\})$.
    The first one is a combination of the famous graph removal lemma in extremal graph theory and a Johnson-type recursive inequality in coding theory, and the second one is the probabilistic method.
    As a consequence, we obtain several intriguing upper bounds for some parameters of $C(N,q,\{w_1,\ldots,w_t\})$, which significantly improve the previously known results.
\end{abstract}

{\it Keywords:} separating hash families, Johnson-type recursive bound, graph removal lemma, probabilistic method

{\it Mathematics subject classifications:} 68R05, 97K20, 94B25

\section{Introduction}

Separating hash families are useful combinatorial objects introduced by Stinson, Wei and Chen \cite{ST08} in 2008.
They are generalizations of many previously known and well-studied combinatorial objects.
Therefore, during the recent years they have received remarkable attentions.
Many efforts have been made to determine the upper and lower bounds for separating hash families, see for example, \cite{Trung2011}, \cite{blackburn08}, \cite{shf} and\cite{ST08}.

Let $X$ and $Y$ be sets of cardinalities $n$ and $q$, respectively.
We call a family $\ma{F}$ of $N$ functions $f:X\rightarrow Y$ an {\it $(N;n,q)$-hash family}.
Let $f:X\rightarrow Y$ be a function and consider pairwise disjoint subsets $C_1,\ldots,C_t\s X$.
We say that $f$ {\it separates} $C_1,\ldots,C_t$ if $f(C_1),\ldots,f(C_t)$ are pairwise disjoint subsets of $Y$.
We further say that $\ma{F}$ is an {\it $(N;n,q,\{w_1,\ldots,w_t\})$-separating hash family} (which will be also denoted as an $SHF(N;n,q,\{w_1,\ldots,w_t\})$) if it satisfies the following property:
For all pairwise disjoint subsets $C_1,\ldots,C_t\s X$ with $|C_i|=w_i$ for $1\le i\le t$, there exists at least one function $f\in\ma{F}$ that separates $C_1,\ldots,C_t$.
We call the multiset $\{w_1,\ldots,w_t\}$ the {\it type} of this separating hash family and denote $u:=\sum_{i=1}^t w_i$ throughout this paper.
Without loss of generality, we may fix the alphabet set $Y$ to be $[q]$, where $[q]:=\{1,\ldots,q\}$.
We also assume that $w_1\le\cdots\le w_t$.

When $w_1=\cdots=w_t=1$, an $SHF(N;n,q,\{1,\ldots,1\})$ is also known to be a {\it $t$-perfect hash family}. Perfect hash families were introduced by Mehlhorn \cite{data1} in 1984 and have found applications in cryptography \cite{perfect3}, \cite{N=3t=3}, database management \cite{data1} and the designs of circuits \cite{circuit} and algorithms \cite{alog}.
When $t=2$, $w_1=1$ and $w_2=w\ge 2$, an $SHF(N;n,q,\{1,w\})$ is known as a {\it $w$-frameproof code}.
When $t=2$, $w_1=w_2=w\ge 2$, an $SHF(N;n,q,\{w,w\})$ is known as a {\it $w$-secure-frameproof code}.
Frameproof and secure-frameproof codes were introduced by Chor, Fiat and Noar \cite{chor1994} in 1994, and by Stinson, Trung and Wei \cite{secure} in 2000, respectively.
They can serve as techniques to prevent copyrighted materials from unauthorized use \cite{BL03}, \cite{ST01}.
Moreover, codes with the {\it identifiable parent property} \cite{ipp}, \cite{shangguanipp} are separating hash families which are simultaneously of type $\{1,1,1\}$ and $\{2,2\}$, and {\it partially hash families} \cite{ippb} are separating hash families satisfying $w_1=\cdots=w_{t-1}=1$ and $w_t=w\ge 2$. Both of them have applications in privacy protection.

The determinations of upper and lower bounds for separating hash families are important open problems in this research area.
Given integers $N,q$ and $w_1,\ldots,w_t$, denote by $C(N,q,\{w_1,\ldots,w_t\})$ the maximal $n$ such that there exists an $SHF(N;n,q,\{w_1,\ldots,w_t\})$.
In the literature, there are two major known approaches to attack the bound $C(N,q,\{w_1,\ldots,w_t\})$.
The first one is called the grouping coordinates method, which states that $C(aN,q,\{w_1,\ldots,w_t\})\le C(N,q^{a},\{w_1,\ldots,w_t\})$ holds for every positive integer $a$.
This inequality can be proved directly by regarding a family of $q$-ary length $aN$ vectors as a family of $q^a$-ary length $N$ vectors.
One can verify that the new family preserves the $\{w_1,\ldots,w_t\}$-separating property as long as the original family does.
Due to this inequality, the problem of bounding $C(N,q,\{w_1,\ldots,w_t\})$ can be reduced to bounding $C(u-1,q,\{w_1,\ldots,w_t\})$, since one can show
$C(N,q,\{w_1,\ldots,w_t\})\le C(u-1,q^{\lceil N/(u-1)\rceil},\{w_1,\ldots,w_t\})$.
In 2008, Blackburn et al. \cite{blackburn08} obtained a linear bound (linear in $q$) for $C(u-1,q,\{w_1,\ldots,w_t\})$, which states that $C(u-1,q,\{w_1,\ldots,w_t\})\le (w_1w_2+u-w_1-w_2)q$.
This bound immediately implies the general upper bound $C(N,q,\{w_1,\ldots,w_t\})\le(w_1w_2+u-w_1-w_2)q^{\lc\fr{N}{u-1}\rc}$.
An improved bound was obtained by Bazrafshan and Trung \cite{Trung2011} in 2011, showing that $C(u-1,q,\{w_1,\ldots,w_t\})\le (u-1)q$.
Moreover, they conjectured that $u-1$ was the smallest linear factor satisfying this bound.
One can easily see that the upper bound derived from the grouping coordinates method could never be better than $C(N,q,\{w_1,\ldots,w_t\})\le(u-1)q^{\lc\fr{N}{u-1}\rc}$,
provided that Bazrafshan and Trung's conjecture was true.
Indeed, recently the correctness of this conjecture was confirmed by the first two authors of this paper in \cite{shf}.
Moreover, in the same paper the authors have introduced a new method to study $C(N,q,\{w_1,\ldots,w_t\})$.
It was shown that $C(N,q,\{w_1,\ldots,w_t\})$ satisfies the following Johnson-type recursive inequality.

\begin{lemma} [Johnson-type bound]\label{johnson}
    Let $1\le l\le N$ be a positive integer, then it holds that $C(N,q,\{w_1,...,w_t\})\le q^l+\max\{u-1,C(N-l,q,\{w_1-1,...,w_t\})\}$.
    Indeed, in the right side of the inequality we can choose the minus of 1 to be after arbitrary $w_i,~1\le i\le t$.
\end{lemma}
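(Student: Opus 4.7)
The plan is to fix an extremal $\{w_1,\ldots,w_t\}$-separating hash family realizing $n = C(N,q,\{w_1,\ldots,w_t\})$ columns, partition them by the pattern displayed on the first $l$ rows, and export a reduced separating property to the remaining $N-l$ rows. For each $x \in [q]^l$, I let $A_x$ be the set of columns whose restriction to rows $1,\ldots,l$ equals $x$, so that $n = \sum_{x \in [q]^l} |A_x|$ and at most $q^l$ of the classes are non-empty.

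The core step is a within-class reduction: I claim that for every non-empty $A_x$ and any designated column $c^* \in A_x$, the restriction of $A_x \setminus \{c^*\}$ to rows $l+1,\ldots,N$ is itself an SHF of the reduced type $\{w_1-1, w_2, \ldots, w_t\}$. To verify this, given pairwise disjoint $C_1', C_2, \ldots, C_t \subseteq A_x \setminus \{c^*\}$ with $|C_1'| = w_1 - 1$ and $|C_i| = w_i$ for $i \ge 2$, I would set $C_1 = C_1' \cup \{c^*\}$ and apply the original separating property to $(C_1, C_2, \ldots, C_t)$. The crucial observation is that all columns of $A_x$ share the value $x_f$ on any row $f \in \{1,\ldots,l\}$, so $f(C_i) = \{x_f\}$ for each $i$; hence no row among the first $l$ separates $(C_1, \ldots, C_t)$, and a separating row must lie in $\{l+1, \ldots, N\}$. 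Since $C_1' \subseteq C_1$, that row also separates $(C_1', C_2, \ldots, C_t)$, yielding $|A_x| - 1 \le C(N-l, q, \{w_1-1, w_2, \ldots, w_t\})$ as soon as $|A_x| \ge u$, while the trivial estimate $|A_x| \le u-1$ handles the small case.

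The last step, which I expect to be the main obstacle, is passing from these per-class estimates to the additive global bound. I would select a single representative $r_x$ from each non-empty class and argue that the excess set $E = X \setminus \{r_x : A_x \ne \emptyset\}$, whose size is at least $n - q^l$, itself carries the $\{w_1-1, w_2, \ldots, w_t\}$-separating property on rows $l+1, \ldots, N$; this would give $|E| \le \max\{u-1, C(N-l, q, \{w_1-1, \ldots, w_t\})\}$ and hence the target inequality. The delicate point is that a putative failing sub-configuration of $E$ need not lie inside a single class, so rows $\{1,\ldots,l\}$ may separate it and prevent the direct within-class argument; my intended fix is to adjoin to $C_1'$ the representative $r_{x_d}$ of the class of a judiciously chosen $d \in C_1'$ so that $f(C_1' \cup \{r_{x_d}\}) = f(C_1')$ on the first $l$ rows, then to combine flexibility in the choice of $d$ with a possible re-selection of representatives to eliminate every separating row in $\{1, \ldots, l\}$. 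The symmetry clause ``the minus of $1$ can be after arbitrary $w_i$'' then follows because the entire reduction is insensitive to which index plays the role of $1$; running the same argument with any $w_i$ decremented delivers the corresponding inequality.
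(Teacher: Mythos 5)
Your setup---partition the columns by their pattern on the first $l$ rows, keep one representative $r_x$ per nonempty class, and bound $n\le q^l+|E|$ where $E$ is the set of non-representatives---is the right skeleton, and your within-class reduction is correct (though, as you note, by itself it only yields the much weaker bound $q^l\cdot(1+\max\{\cdots\})$). The gap sits exactly at the step you flag as the main obstacle, and your proposed fix does not close it: you suggest adjoining to $C_1'$ the representative $r_{x_d}$ of the class of some $d\in C_1'$. That choice makes the augmentation invisible on the first $l$ rows, i.e.\ $f(C_1'\cup\{r_{x_d}\})=f(C_1')$ for $f\le l$, but invisibility is not what is needed: when the sets $C_1',C_2,\ldots,C_t$ meet several classes, a row $f\le l$ may perfectly well separate $(C_1,\ldots,C_t)$, and then the separating row guaranteed by the SHF property tells you nothing about rows $l+1,\ldots,N$. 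No re-selection of representatives removes this possibility, and when $w_1=1$ there is no $d\in C_1'$ to choose at all.

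The correct move is to adjoin the representative of the class of an element of a \emph{different} part. Pick any $d\in C_2$ (nonempty since $w_2\ge1$) and set $C_1=C_1'\cup\{r_{x_d}\}$; since $d\in E$ we have $r_{x_d}\ne d$ and $r_{x_d}\notin E$, so $C_1,C_2,\ldots,C_t$ are pairwise disjoint subsets of $\mathcal{F}$. For every row $f\le l$ one has $f(r_{x_d})=f(d)\in f(C_1)\cap f(C_2)$, so no row among the first $l$ can separate $(C_1,\ldots,C_t)$; the separating row therefore lies in $\{l+1,\ldots,N\}$, and since $C_1'\subseteq C_1$ it also separates $(C_1',C_2,\ldots,C_t)$ there. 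Hence $E$ restricted to the last $N-l$ rows is $\{w_1-1,w_2,\ldots,w_t\}$-separating once $|E|\ge u-1$, which gives $|E|\le\max\{u-1,\,C(N-l,q,\{w_1-1,\ldots,w_t\})\}$ and the lemma. The symmetry clause then does follow as you say: to decrement $w_i$ one only needs some other nonempty part $C_j$, which exists because $t\ge2$.
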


Using this inequality, the best known general upper bound for $C(N,q,\{w_1,\ldots,w_t\})$ was derived in \cite{shf}, which is stated as the following theorem.

\begin{theorem} \label{recusivebd}
   Suppose there exists an $SHF(N;n,q,\{w_1,\ldots,w_t\})$. Let $u=\sum_{i=1}^t w_i$ and let $1\le r\le u-1$ be the positive integer such that $N\equiv r \pmod{u-1}$.
   If $C(\lfloor N/(u-1)\rfloor,q,\{w_1,\ldots,w_t\})\ge u$, then it holds that $n\le rq^{\lceil N/(u-1)\rceil}+(u-1-r)q^{\lfloor N/(u-1)\rfloor}$.
\end{theorem}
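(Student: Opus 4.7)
The plan is to iterate Lemma~\ref{johnson} exactly $u-2$ times and then invoke the trivial equality $C(N',q,\{1,1,0,\ldots,0\})=q^{N'}$ (any $\{1,1\}$-SHF is just a matrix with pairwise distinct columns). First, I will fix positive integers $l_1,\ldots,l_{u-1}$, each equal to $\lfloor N/(u-1)\rfloor$ or $\lceil N/(u-1)\rceil$, with exactly $r$ of them equal to $\lceil N/(u-1)\rceil$; these satisfy $\sum_{k=1}^{u-1}l_k=N$. Exploiting the flexibility in Lemma~\ref{johnson} to decrement any chosen $w_i$ at each step, I will schedule the $u-2$ decrements so that, starting from $\{w_1,\ldots,w_t\}$, the reduced type after step $u-2$ is $\{1,1,0,\ldots,0\}$; this is feasible because $(w_1-1)+(w_2-1)+w_3+\cdots+w_t=u-2$.

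Setting $N_k:=N-\sum_{j=1}^{k}l_j$ (so $N_{u-2}=l_{u-1}$), letting $\mathrm{type}_k$ denote the type after $k$ decrements, and writing $T_k:=C(N_k,q,\mathrm{type}_k)$, Lemma~\ref{johnson} gives $T_{k-1}\le q^{l_k}+\max\{u-k,T_k\}$ for $k=1,\ldots,u-2$. The key point is to verify $T_k\ge u-k$ so that the maximum is always attained by $T_k$ and the chain telescopes. For this I will use two monotonicity observations: (i) $C(\cdot,q,\mathrm{type})$ is nondecreasing in its first argument; and (ii) any $\{w_1,\ldots,w_t\}$-SHF on $n\ge u$ columns is automatically a $\{w_1',\ldots,w_t'\}$-SHF whenever $w_i'\le w_i$ for all $i$ (any disjoint $(C_1',\ldots,C_t')$ with $|C_i'|=w_i'$ can be extended to disjoint $(C_1,\ldots,C_t)$ with $C_i\supseteq C_i'$ and $|C_i|=w_i$, which is possible precisely because $n\ge u$, and separation of the larger configuration forces separation of the smaller). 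Since $N_k\ge N_{u-2}=l_{u-1}\ge\lfloor N/(u-1)\rfloor$, combining the theorem's hypothesis with (i) and (ii) gives $T_k\ge C(\lfloor N/(u-1)\rfloor,q,\mathrm{type}_k)\ge C(\lfloor N/(u-1)\rfloor,q,\{w_1,\ldots,w_t\})\ge u\ge u-k$.

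Iterating the inequality then collapses the chain to
$$n\le T_0\le\sum_{k=1}^{u-2}q^{l_k}+T_{u-2}=\sum_{k=1}^{u-1}q^{l_k}=rq^{\lceil N/(u-1)\rceil}+(u-1-r)q^{\lfloor N/(u-1)\rfloor},$$
as claimed. The main obstacle I anticipate is this monotonicity propagation: without the hypothesis $C(\lfloor N/(u-1)\rfloor,q,\{w_1,\ldots,w_t\})\ge u$, the $\max$ in Lemma~\ref{johnson} could flip to the constant branch $u-k$ at some intermediate step and spoil the telescoping, producing an additive constant in every remaining iterate. The hypothesis is engineered precisely to rule this out via the two-step monotonicity described above.
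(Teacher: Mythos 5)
Your proposal is correct and follows essentially the route the paper intends (the proof, given in the cited reference, is exactly an iteration of the Johnson-type bound of Lemma~\ref{johnson} with block lengths $\lceil N/(u-1)\rceil$ and $\lfloor N/(u-1)\rfloor$ down to the type $\{1,1\}$, where $C(N',q,\{1,1\})=q^{N'}$, using the hypothesis $C(\lfloor N/(u-1)\rfloor,q,\{w_1,\ldots,w_t\})\ge u$ to keep the maximum on the recursive branch at every step). Your two monotonicity observations correctly supply the needed bound $T_k\ge u-k$, and the only point left implicit — that each $l_k\ge 1$, i.e.\ $\lfloor N/(u-1)\rfloor\ge 1$ — is itself forced by that same hypothesis, so there is no gap.
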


Obviously, for $(u-1)\nmid N$ the above bound is an improvement of the previous ones since $r<u-1\le w_1w_2+u-w_1-w_2$. Indeed it behaves very well for fixed $w_1,\ldots,w_t,N$ and sufficiently large $q$.
The exponent $\lc\fr{N}{u-1}\rc$ is realistic in the sense that there are probabilistic constructions (see \cite{perfect2} for a proof using the Lov$\acute{a}$sz Local Lemma or Theorem 2.1 of \cite{ippc} for a proof using the alteration method) showing that
\begin{equation*}
  \begin{aligned}
    C(N,q,\{w_1,\ldots,w_t\})\ge\fr{1}{2^u}(\fr{1}{1-g(q,u)})^{\fr{N}{u-1}},
  \end{aligned}
\end{equation*}

\noindent where $g(q,u)=\fr{q(q-1)\cdots(q-u+1)}{q^u}$, which implies that

\begin{equation*}
  \begin{aligned}
    C(N,q,\{w_1,\ldots,w_t\})>\fr{1}{2^u}(\fr{q}{\binom{u}{2}})^{\fr{N}{u-1}}=\Omega_{u,N}(q^{\fr{N}{u-1}})
  \end{aligned}
\end{equation*}

\noindent holds for sufficiently large $q$ and fixed $w_1,\ldots,w_t,N$.
It follows that the exponent $\lc\fr{N}{u-1}\rc$ cannot be further reduced when $(u-1)\mid N$. It is an open problem \cite{blackburn08} to determine whether the exponent is tight when $(u-1)\nmid N$.
After many efforts \cite{fuji2015perfect}, \cite{N=3t=3}, the first breakthrough results in this direction were obtained in \cite{shf}.
Answering an open problem of Walker II and Colbourn \cite{N=3t=3}, it was shown
\begin{equation}\label{111}
  \begin{aligned}
    q^{2-o(1)}<C(3,q,\{1,1,1\})=o(q^2)
  \end{aligned}
\end{equation}
and
\begin{equation}\label{1111}
  \begin{aligned}
    q^{2-o(1)}<C(4,q,\{1,1,1,1\})=o(q^2)
  \end{aligned}
\end{equation}

\noindent hold for sufficiently large $q$.
In this paper we will develop a systematic method to study the upper bound for $C(N,q,\{w_1,\ldots,w_t\})$ when $(u-1)\nmid N$.
We will extend the upper bounds of (\ref{111}) and (\ref{1111}) to more general parameters.

On the other hand, the bound presented in Theorem \ref{recusivebd} does not behave quite well for relatively small $q$ (compared with $u$ and $N$).
For example, in \cite{Korner} and \cite{nilli} it was shown that for $w_1=\cdots=w_t=1$ it holds that
\begin{equation}\label{phfbound}
  \begin{aligned}
    C(N,q,\{1,\ldots,1\})\le\min_{0\le j\le t-2}(t-j-1)(\fr{q-j}{t-j-1})^{(1+o(1))g(q,j+1)N},
  \end{aligned}
\end{equation}
where the $o(1)$ term is induced by the estimation $\binom{n}{t}\thickapprox\fr{n^t}{t!}$. Setting $q=t$ one can deduce from (\ref{phfbound}) that
\begin{equation}\label{phfbound2}
  \begin{aligned}
    C(N,t,\{1,\ldots,1\})
    \le\min\{2^{(1+o(1))\fr{t!}{t^{t-1}}N},(t-1)(\fr{t}{t-1})^N\},
  \end{aligned}
\end{equation}
which is much better than the bound $C(N,t,\{1,\ldots,1\})=\ma{O}(t^{\lc\fr{N}{t-1}\rc})$ obtained by Theorem \ref{recusivebd}.
The paper \cite{separatingcodes} contains new upper bounds on $SHF(N,n,q,\{w_1,w_2\})$ for some small $q$ and small $w_1,w_2$.
In \cite{22separating} it was shown that $C(N,2,\{2,2\})\le\ma{O}(2^{0.28N})$ which is superior to the bound $\ma{O}(2^{0.33N})$ obtained by Theorem \ref{recusivebd}.

For the very special case $t=2$ and $q=2$, we use $N(w)$ to denote the minimal integer $N$ such that there exists an $SHF(N;n,2,\{1,w\})$ satisfying $n>N$. 
The determination of $N(w)$ has received considerable attentions. In \cite{binaryfpc1} it was shown $N(w)\ge 3w$.
The best known result for $N(w)$ was obtained by Miao and the authors of this paper in \cite{shangguan2014new}, which shows that
\begin{equation}\label{N(w)shangguan}
  \begin{aligned}
    N(w)>\binom{w+1}{2}.
  \end{aligned}
\end{equation}

The main purpose of this paper is to introduce two novel methods, namely, the graph removal lemma (see Lemma \ref{graphremoval} below) combined with the Johnson-type inequality,
and the probabilistic method, to attack the upper bound of $C(N,q,\{w_1,\ldots,w_t\})$.
As a result, we can improve the bound presented in Theorem \ref{recusivebd} in various aspects.
\begin{itemize}
\item Firstly, it was widely believed that $C(4,q,\{2,2\})\le c_1q^2+c_2q$ and many efforts had been made to improve the constants $c_1$ and $c_2$, see for example, \cite{trung2014}, \cite{ST01}, \cite{secure}, \cite{ST08} and \cite{secureshf}. Recently, the best upper bound along this line is proved by Niu and Cao in \cite{Niu2016Some}, which states that $C(2w;q,\{w,w\})\le(q-1)^2+1$.
However, for sufficiently large $q$, using the graph removal lemma we can prove an asymptotically optimal bound showing that $q^{2-o(1)}<C(4,q,\{2,2\})=o(q^2)$.
Our bound is somewhat surprising in the sense that it is the first bound in the literature showing that the order $q^2$ is not achievable.
By combining the graph removal lemma and the Johnson-type recursive inequality, we are able to prove a much more general bound (see Theorem \ref{upperbound} below), showing that $C(w_1+\cdots+w_t,q,\{w_1,\ldots,w_t\})=o(q^2)$ holds for arbitrary $w_1,\ldots,w_t$ except for $t=2$ and $\{w_1,w_2\}=\{1,w\}$.

\item Secondly, for the special case $w_1=\cdots=w_t=w$ and $q=t$, previously known result implies that $C(N,t,\{w,\dots,w\})=\ma{O}(t^{\lc\fr{N}{tw-1}\rc})$.
By employing a probabilistic argument, we can show $C(N,t,\{w,\dots,w\})=t^{\ma{O}(\fr{(t!)^2N}{t^{tw-1}})}$ (see Theorem \ref{lowerbd} below),
which substantially improves the bound obtained by Theorem \ref{recusivebd}.

\item Thirdly, by relating the determination of $C(N,2,\{1,w\})$ to an old conjecture of Erd\H{o}s, Frankl and F{\"u}redi on cover-free families \cite{CFF}, we improve the result of (\ref{N(w)shangguan}) by showing that $N(w)>\frac{15+\sqrt{33}}{24} (w-2)^2$ (see Theorem \ref{shangguanbinary} below), which also implies a tight bound that $C(N,2,\{1,w\})=N$ for $N\le\frac{15+\sqrt{33}}{24} (w-2)^2$.
\end{itemize}

The rest of this paper is organized as follows. Section 2 contains the necessary notations and definitions for the proofs in this paper.
In Section 3 we will use the graph removal lemma to derive an upper bound for a problem in graph theory.
The bound itself is of independent interest and can also be used to derive an upper bound for $SHF(4;n,q\{2,2\})$.
In Section 4, we will use the Johnson-type recursive inequality to prove our general bound for $SHF(\sum_{i=1}^t w_i;n,q,\{w_1,\ldots,w_t\})$.
In Section 5, we will use a probabilistic method to study the behavior of $C(N,q,\{w_1,\ldots,w_t\})$ for the special case $q=t$.
In Section 6, we will relate the determination of $N(w)$ to a conjecture of Erd\H{o}s, Frankl and F{\"u}redi and present an improved upper bound for $N(w)$.
Section 7 contains some concluding remarks and open problems.

\section{Preliminaries}

    \subsection{Representation matrices, separating hash families and hypergraphs}

    In general, one can regard a separating hash family as a matrix.
   An $N\times n$ matrix $M$ on $q$ symbols is called $\{w_1,\ldots,w_t\}$-separating if for arbitrary $t$ pairwise disjoint column sets $C_1,\ldots,C_t$ with $|C_i|=w_i$ for $1\le i\le t$,
   there exists a row $f$ such that $f(C_1),\ldots,f(C_t)$ are also pairwise disjoint, where $f(C_i)$ denotes the collection of components of $C_i$ restricted to row $f$.
   We call $M$ the {\it representation matrix} of an $SHF(N;n,q,\{w_1,\ldots,w_t\})$.

   We will use hypergraphs to study separating hash families. 
   A hypergraph $\ma{H}=(V(\ma{H}),E(\ma{H}))$ can be viewed as a pair of vertices and edges,
   where the vertex set $V(\ma{H})$ can be regarded as a finite set $X$ and the edge set $E(\ma{H})$ can be regarded as a collection of subsets of $X$.
   For the sake of simplicity, we write $\ma{H}$ to represent the edge set $E(\ma{H})$, and hence $|\ma{H}|$ stands for $|E(\ma{H})|$.
  A hypergraph $\ma{H}$ is said to be linear if for all distinct $A,B\in \ma{H}$ it holds that $|A \cap B|\le1$. Furthermore, we say $\ma{H}$ is $r$-uniform if $|A|=r$ for every $A\in \ma{H}$.

  An $r$-uniform hypergraph $\ma{H}$ is $r$-partite if its vertex set $V(\ma{H})$ can be colored in $r$ colors in such a way that no edge of $\ma{H}$ contains two vertices of the same color.
  In such a coloring, the color classes of $V(\ma{H})$, i.e., the sets of all vertices of the same color, are called vertex parts of $\ma{H}$.
  We use $V_1,\ldots,V_r$ to denote the $r$ color classes of $V(\ma{H})$. Then $V(\ma{H})$ is a disjoint union of the $V_i$'s and for every $A\in\ma{H}$, $|A\cap V_i|=1$ holds for each $1\le i\le r$.

  One can also regard an $N$-uniform $N$-partite hypergraph $\ma{H}$ with $|V_1|=\cdots=|V_N|=q$ as an $N\times|\ma{H}|$ $q$-ary matrix $M$, which can be realized as follows.
  The rows of $M$ are indexed by the vertex parts of $\ma{H}$ and columns of $M$ are indexed by the edges of $\ma{H}$.
  For each $1\le i\le r$, without loss of generality we can assume that $V_i=\{v_{i1},\ldots,v_{iq}\}$, then $V(\ma{H})=\{v_{ij}\mid 1\le i\le N,~1\le j\le q\}$ is well-defined.
  Every edge $A\in\ma{H}$ (i.e. column of $M$) must have the form $A=\{v_{1j_1},\ldots,v_{Nj_N}\}$, where $1\le j_k\le q$ for each $1\le k\le N$.
  The entry in row $i$ and column $A$ is just the vertex $V_i\cap A\in V(\ma{H})$.
  We call $M$ the {\it representation matrix} of the hypergraph $\ma{H}$.
  Observe that $|V(\ma{H})|=\sum_{i=1}^N|V_i|=Nq$, it follows that $M$ is an $Nq$-ary matrix.
  However, when investigating the separating property of such a matrix, we only want to know whether there exists a row that can separate several sets of columns.
  Thus two entries of $M$ located in two different rows will not interact each other.
  So for simplicity one can just view $M$ as a $q$-ary matrix by setting $V_i=\{v_{i1},\ldots,v_{iq}\}=\{1,\ldots,q\}$ for each $1\le i\le N$.

    \subsection{Hypergraph rainbow cycles}

    To study the upper bound of $C(4,q,\{2,2\})$ we will use the notion of rainbow 4-cycles. The formal definition of rainbow cycles is presented as follows.
    \begin{definition}
      Let $\ma{H}$ be an $r$-uniform $r$-partite linear hypergraph with vertex parts $V_1,\ldots,V_r$. A rainbow $k$-cycle of $\ma{H}$ is an alternating sequence of vertices and edges of the form $v_1,E_1,v_2,E_2,\ldots,v_k,E_k,v_1$ such that
       \begin{itemize}
      \item [(a)]$v_1,v_2,\ldots,v_k$ are distinct vertices of $\ma{H}$,
      \item [(b)]$E_1,E_2,\ldots,E_k$ are distinct edges of $\ma{H}$,
      \item [(c)]$v_i,v_{i+1}\in E_i$ for $1\le i\le k-1$ and $v_k,v_1\in E_k$,
      \item [(d)]For $1\le i\le k$, $v_i\in V_{j_i}$ and $j_i\neq j_{i'}$ for arbitrary $i\neq i'$.
    \end{itemize}
    \end{definition}
    \noindent Condition $(d)$ implies that all $k$ vertices are located in $k$ different vertex parts.
    It is easy to see that for $r$-partite hypergraphs, a rainbow $k$-cycle exists only if $k\le r$.
    In Section 3 we will present an upper bound for the maximal number of edges of a linear hypergraph that contains no rainbow cycles (see Theorem \ref{rainbowcycle} below).
    In Section 4 this bound will be used to derive an upper bound for separating hash families.

\subsection{Cover-free families and a conjecture of Erd\H{o}s, Frankl and F{\"u}redi}

Let $X$ be a set of $N$ elements. A family $\ma{F}\s 2^{X}$ is said to be $w$-cover-free if for arbitrary distinct $w+1$ members $A_0,A_1,\ldots,A_w$ of $\ma{F}$
it holds that $A_0\nsubseteq A_1\cup A_2\cup\cdots\cup A_w$.
Suppose that $|\ma{F}|=n$ and let us denote $X=\{x_1,\ldots,x_N\}$ and $\ma{F}=\{A_1,\ldots,A_n\}$.
$\ma{F}$ will be denoted as a $CFF(N;n,w)$.
Denote by $M^*$ the representation matrix of $\ma{F}$, which is an $N\times n$ binary matrix whose rows are indexed by the elements of $X$ and whose columns are indexed by the members of $\ma{F}$, such that the entry in the $i$-th row and the $j$-th column is 1 if and only if $x_i\in A_j$.
In a binary matrix, the weight of a column is simply the number of 1's contained in it. We will need a property of cover-free families.

\begin{lemma}[\cite{cffpro}]\label{cffpro}
    Let $\ma{F}$ be a $CFF(N;n,w)$ with representation matrix $M^*$. Fix an arbitrary member $A$ of $\ma{F}$ and consider the new family $\ma{F}_1$ defined by
    \begin{enumerate}
      \item [1)] $\ma{F}_1\s 2^{X\setminus A}$,
      \item [2)] $\ma{F}_1=\{B\setminus A:B\in\ma{F},~B\neq A\}$.
    \end{enumerate}
    \noindent Then $\ma{F}_1$ is a $CFF(N-|A|;n-1,w-1)$.
\end{lemma}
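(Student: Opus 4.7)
The plan is to verify three things implicit in the conclusion: the ground set of $\ma{F}_1$ has exactly $N-|A|$ elements, the family has exactly $n-1$ members (i.e., the map $B\mapsto B\setminus A$ restricted to $\ma{F}\setminus\{A\}$ is injective), and $\ma{F}_1$ is $(w-1)$-cover-free. The first is immediate from the definition. The remaining two facts both follow from one observation: any cover relation inside $\ma{F}_1$ lifts, upon adjoining $A$ to the covering side, to a cover relation inside $\ma{F}$ of one larger order.

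For injectivity, I would argue by contradiction. Suppose $B_1\setminus A=B_2\setminus A$ for two distinct $B_1,B_2\in\ma{F}\setminus\{A\}$. Then $B_1\s A\cup(B_2\setminus A)\s A\cup B_2$, exhibiting $B_1$ as covered by the two distinct members $A$ and $B_2$ of $\ma{F}$ other than itself; for $w\ge 2$ this contradicts the $w$-cover-freeness of $\ma{F}$. Hence the map is injective and $|\ma{F}_1|=n-1$.

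For the cover-free property, take any $w$ distinct members $C_0,C_1,\ldots,C_{w-1}$ of $\ma{F}_1$, with $C_i=B_i\setminus A$ for distinct $B_i\in\ma{F}\setminus\{A\}$ (the $B_i$ being distinct is guaranteed by the injectivity just established). Assume for contradiction that $C_0\s C_1\cup\cdots\cup C_{w-1}$. Then $B_0\setminus A\s B_1\cup\cdots\cup B_{w-1}$, and adding $A$ to both sides yields $B_0\s A\cup B_1\cup\cdots\cup B_{w-1}$. Since $A,B_1,\ldots,B_{w-1}$ are $w$ distinct members of $\ma{F}$ all different from $B_0$, this contradicts the $w$-cover-freeness of $\ma{F}$. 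Therefore $\ma{F}_1$ is $(w-1)$-cover-free.

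I do not anticipate any real obstacle: the argument is a clean reduction, and the only point requiring care is that the two contradictions produced always involve members genuinely distinct from the ``covered'' one, which is why we explicitly exclude $A$ when defining $\ma{F}_1$ and use the injectivity when handling the cover condition.
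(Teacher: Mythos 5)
Your proof is correct and follows essentially the same route as the paper's: assume a cover relation $C_0\s C_1\cup\cdots\cup C_{w-1}$ in $\ma{F}_1$, adjoin $A$ to the covering side to lift it to $B_0\s A\cup B_1\cup\cdots\cup B_{w-1}$, and contradict the $w$-cover-freeness of $\ma{F}$. The only difference is that you spell out the injectivity of $B\mapsto B\setminus A$ (so that $|\ma{F}_1|=n-1$), which the paper dismisses as "easy to verify"; your explicit argument for it is a reasonable, correct addition.
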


\begin{proof}
    The first two parameters in $CFF(N-|A|;n-1,w-1)$ are easy to verify. It suffices to prove that $\ma{F}_1$ is a $(w-1)$-cover-free family. Suppose otherwise, there are $w$ different members $B_0,B_1,\ldots,B_{w-1}$ of $\ma{F}_1$ such that $B_0\s B_1\cup\cdots \cup B_{w-1}$. For each $0\le i\le w-1$, denote $A_i$ the member of $\ma{F}$ such that $B_i=A_i\setminus A$. Then it holds that $A_0\s B_0\cup A\s (B_1\cup\cdots B_{w-1})\cup A\s A_1\cup\ldots\cup A_{w-1}\cup A$, which violates the $w$-cover-free property of $\ma{F}$.
\end{proof}

Denote $N^*(w)$ the minimal $N$ such that there exists a $CFF(N;n,w)$ with $n>N$. In 1985 Erd\H{o}s, Frankl and F{\"u}redi \cite{CFF} posed the following conjecture:

\begin{conjecture}[\cite{CFF}]\label{cff}
  $\lim_{w\rightarrow\infty}\fr{N^*(w)}{w^2}=1$, or in an even stronger form $N^*(w)\ge(w+1)^2$.
\end{conjecture}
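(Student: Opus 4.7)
The statement is the 1985 conjecture of Erd\H{o}s, Frankl and F\"uredi, which remains wide open in its full strength; the authors presumably state it in order to motivate their improved bound on $N(w)$ in Section 6 rather than to resolve it. Nonetheless, I will sketch the natural two-sided strategy one would attempt.

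For the weak upper bound $\limsup_{w\to\infty} N^*(w)/w^2 \le 1$, the obvious construction uses a projective plane of order $w$: take $X$ as its point set and $\mathcal{F}$ as its line set, each of cardinality $w^2+w+1$. Any two distinct lines meet in exactly one point, so for any $w+1$ lines $A_0, A_1, \ldots, A_w$ the union $A_1 \cup \cdots \cup A_w$ covers at most $w$ points of $A_0$ while $|A_0| = w+1$; hence $\mathcal{F}$ is a $w$-cover-free family. A small perturbation (e.g. delete one point and adjust the lines through it) should yield $n > N$ while keeping $N$ of order $w^2$, certifying $N^*(w) \le (1+o(1))w^2$.

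For the matching lower bound I would attempt induction on $w$ driven by Lemma \ref{cffpro}. Assume $N^*(w-1) \ge w^2$ and, for contradiction, let $\mathcal{F}$ be a $CFF(N;N+1,w)$ with $N \le w^2+2w$. Picking a member $A$ of smallest size and applying Lemma \ref{cffpro} produces a $CFF(N-|A|; N, w-1)$, forcing $|A| \le N - N^*(w-1) \le 2w$. Hence every member of $\mathcal{F}$ has size at most $2w$. One would then double-count pairs $(B,x)$ with $B \in \mathcal{F}$ and $x$ a private element witnessing $B \not\subseteq A_1 \cup \cdots \cup A_w$ for some $(w+1)$-tuple in $\mathcal{F}$, weighing the uniform weight bound $|B| \le 2w$ against a dual bound on the multiplicity of each element, aiming to force $n \le N$.

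The principal obstacle is precisely this closing counting step: the best bounds in the literature (F\"uredi, Ruszink\'o, \emph{et al.}) reach only $N^*(w) = \Omega(w^2/\log w)$, since each element participates in cover-free witnesses for many distinct $(w+1)$-tuples and naive double counting therefore loses a $\log w$ factor. Removing this factor, as the strong form $N^*(w) \ge (w+1)^2$ demands, seems to require a genuinely new idea, perhaps an entropy/Shearer-type argument or an algebraic rigidity forced by iterating Lemma \ref{cffpro} many times. For the paper's own goal it should suffice to feed the currently known lower bounds on $N^*(w)$ into the reduction relating cover-free families to $SHF(N;n,2,\{1,w\})$, which is presumably how Section 6 derives the claimed bound $N(w) > \frac{15+\sqrt{33}}{24}(w-2)^2$.
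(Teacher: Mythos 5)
You have correctly identified that this statement is an open conjecture of Erd\H{o}s, Frankl and F\"uredi which the paper does not (and does not claim to) prove: it is stated only to motivate Section 6, where the best known partial result toward it (Lemma~\ref{grouptesting}) is combined with Lemma~\ref{fpctightbd} to bound $N(w)$, exactly as you surmise in your last sentence. Two small factual corrections to your discussion: the known lower bound in this regime is already quadratic, namely $N^*(w)\ge\frac{15+\sqrt{33}}{24}w^2$ (the paper's Lemma~\ref{grouptesting}), not merely $\Omega(w^2/\log w)$; and the standard witness for near-tightness of the strong form is the affine plane of order $w+1$, which gives $n=(w+1)^2+(w+1)>N=(w+1)^2$ directly, whereas your projective plane of order $w$ only achieves $n=N$ and so does not certify $N^*(w)\le w^2+w+1$ without the perturbation you defer.
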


Note that when $w+1$ is a prime power, an affine plane of order $w+1$ induces an example for $CFF((w+1)^2;(w+1)^2+(w+1),w)$.
The best known lower bound for $N^*(w)$ is proved in \cite{shangguan2015gptesting}, which is restated as follows.

\begin{lemma}[\cite{shangguan2015gptesting}]\label{grouptesting}
    $N^*(w)\ge \fr{15+\sqrt{33}}{24} w^2$.
\end{lemma}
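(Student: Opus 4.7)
The plan is to proceed by induction on $w$ using Lemma~\ref{cffpro} as the reduction step, coupled with a dichotomy on whether $\ma{F}$ contains a ``large'' member. Set $c = \fr{15+\sqrt{33}}{24}$, which is the positive root of $12c^2 - 15c + 4 = 0$. Base cases of small $w$ are handled directly; for instance $w = 2$ forces $\ma{F}$ to be an antichain (a pair of comparable sets, together with any third set, violates $2$-cover-freeness), and a Sperner-type count gives $N^*(2) \ge 4 \ge 4c$. For the inductive step I assume $N^*(w') \ge c(w')^2$ for every $w' < w$, take a $CFF(N;n,w)$ $\ma{F}$ with $n > N$, and aim to prove $N \ge cw^2$.

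The easy case is when some $A \in \ma{F}$ satisfies $|A| \ge c(2w-1)$. Applying Lemma~\ref{cffpro} to this $A$ produces a $CFF(N-|A|;n-1,w-1)$, and since $n > N$ implies $n-1 \ge N > N - |A|$, the inductive hypothesis gives $N - |A| \ge c(w-1)^2$. Rearranging, $N \ge |A| + c(w-1)^2 \ge c(2w-1) + c(w-1)^2 = cw^2$, as needed.

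Otherwise every $A \in \ma{F}$ has $|A| < c(2w-1)$, and here one argues directly. Pick any $A \in \ma{F}$ and consider the trace family $\ma{T}_A = \{B \cap A : B \in \ma{F} \setminus \{A\}\}$: it is a multiset of $n-1 \ge N$ proper subsets of $A$, no $w$ of which cover $A$, so the complementary family $\{A \setminus B : B \in \ma{F} \setminus \{A\}\}$ is a $w$-wise intersecting collection of nonempty subsets of $A$. A double counting over the representation matrix $M^*$---bounding $\sum_{B \ne A} |B \cap A|$ and $\sum_{B \ne A} |B \setminus A|$ using the uniform upper bound $|B| < c(2w-1)$ and the $w$-wise intersecting property---yields a second lower bound for $N$ in terms of $|A|$ and $w$. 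Matching this bound against the Case~1 threshold $c(2w-1)$ is precisely what pins down the quadratic $12c^2 - 15c + 4 = 0$ and singles out $c = \fr{15+\sqrt{33}}{24}$ as the correct constant.

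The main obstacle is Case~2. The $w$-wise intersecting condition on $\{A \setminus B\}$ is very weak (far weaker than pairwise intersecting), so classical intersecting-family tools give essentially nothing. To extract a bound of the right strength one must combine counting inside $A$ with counting outside $A$---every $B \ne A$ must contribute at least one element to $X \setminus A$ because $A \not\subseteq B$---and exploit the uniform smallness of all members of $\ma{F}$, then carefully optimize the resulting inequalities so that they meet the Case~1 threshold at exactly the right point. It is this tight calibration of two competing counts that determines the numerical constant $\fr{15+\sqrt{33}}{24}$ rather than a weaker value.
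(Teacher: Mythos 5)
First, a point of order: the paper does not prove Lemma~\ref{grouptesting} at all --- it is imported verbatim from \cite{shangguan2015gptesting} (``The best known lower bound for $N^*(w)$ is proved in \cite{shangguan2015gptesting}, which is restated as follows''). So there is no in-paper argument to measure you against; your attempt has to stand on its own.

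On its own terms it has a genuine gap. The skeleton is reasonable: induction on $w$, the reduction via Lemma~\ref{cffpro}, and the Case~1 computation $N\ge |A|+c(w-1)^2\ge c(2w-1)+c(w-1)^2=cw^2$ are all correct (and the minor point that the $n-1$ sets $B\setminus A$ remain distinct follows from $2$-cover-freeness, so the parameter $n-1$ is legitimate). But essentially the entire content of the lemma lives in Case~2, and there you never actually produce an inequality. You observe that the traces $B\cap A$ give a $w$-wise intersecting family of complements, announce that ``a double counting over the representation matrix \ldots yields a second lower bound for $N$,'' and assert that matching it against the threshold $c(2w-1)$ produces the quadratic $12c^2-15c+4=0$. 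None of the three quantities being double-counted is bounded, the ``second lower bound'' is never written down, and the quadratic is reverse-engineered from the known answer rather than derived. Indeed your own closing paragraph concedes that $w$-wise intersecting is too weak and that the calibration of the two counts is the real difficulty --- which is to say, the proof of the hard case is still missing. A naive count in Case~2 (each $B\ne A$ contributes at least one element outside $A$, all sets have size $<c(2w-1)$, etc.) gives bounds that are linear in $w$ per set and does not obviously assemble into $N\ge cw^2$; some additional structural idea (in \cite{shangguan2015gptesting} this involves a much more careful analysis of column weights in the representation matrix) is required, and it is precisely that idea that determines whether the constant is $\frac{15+\sqrt{33}}{24}$ or something weaker.

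A smaller quibble: $12c^2-15c+4=0$ has two positive roots, $\frac{15\pm\sqrt{33}}{24}$, so ``the positive root'' does not identify $c$; you need an argument selecting the larger one, and that argument can only come from the missing Case~2 analysis.
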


\noindent By establishing a bridge between cover-free families and separating hash families, this lemma will be used to prove a lower bound for $N(w)$ in Section 6.

\section{The maximal number of edges of a linear hypergraph which contains no rainbow cycles}

Rainbow cycles are closely related to separating hash families in the sense that an $SHF(2w;n,q,\{w,w\})$ cannot contain a rainbow $2w$-cycle.
In the following we will present two examples to illustrate this observation.

\begin{example}\label{eg4cycle}
   Let $\ma{F}$ be an $SHF(4;n,q,\{2,2\})$ and $\ma{H}_{\ma{F}}$ be the hypergraph defined by the representation matrix of $\ma{F}$.
   Assume $\ma{H}_{\ma{F}}$ contains a rainbow 4-cycle $$v_4,A_1,v_1,A_2,v_2,A_3,v_3,A_4,v_4,$$ which can be depicted as Table \ref{rain4-cycle}.
   Obviously, no row of the representation matrix can separate $\{A_1,A_3\}$ and $\{A_2,A_4\}$, which violates the $\{2,2\}$-separating property.

    \begin{table}[ht]
  \begin{center}
    \begin{tabular}{|c|c|c|c|c|}
      \hline
       & $A_1$ & $A_2$ & $A_3$ & $A_4$ \\\hline
      $V_1$ & $v_1$ & $v_1$ & &\\\hline
      $V_2$ &  & $v_2$ & $v_2$ & \\\hline
      $V_3$ &  &  & $v_3$ & $v_3$\\\hline
      $V_4$ & $v_4$ &  &  & $v_4$\\\hline
    \end{tabular}
  \end{center}
   \caption{Rainbow 4-cycle}\label{rain4-cycle}
    \end{table}
\end{example}

\begin{example}\label{rainbow2w-cycle}
Let $\ma{F}$ be an $SHF(2w;n,q,\{w,w\})$ and $\ma{H}_{\ma{F}}$ be the hypergraph defined by the representation matrix of $\ma{F}$.
Assume $\ma{H}_{\ma{F}}$ contains a $2w$-cycle, which can be denoted as $$v_{2w},A_1,v_1,A_2,v_2,A_3,\ldots,A_{2w-1},v_{2w-1},A_{2w},v_{2w}.$$
As in Example \ref{eg4cycle}, this $2w$-cycle can be depicted as Table \ref{rain2w-cycle}. Obviously, no row of the representation matrix can separate $\{A_1,A_3,\ldots,A_{2w-1}\}$ and $\{A_2,A_4,\ldots,A_{2w}\}$, which violates the $\{w,w\}$-separating property.

      \begin{table}[h]
  \begin{center}
\begin{tabular}{|c|c|c|c|c|c|c|c|}
  \hline
   & $A_1$ & $A_2$ & $A_3$ & $\cdots$ & $\cdots$  & $A_{2w-1}$ & $A_{2w}$ \\\hline
  $V_1$ & $v_1$ & $v_1$ &  &  &  &    &\\\hline
  $V_1$ &  & $v_2$ & $v_2$ &  &  &    &\\\hline
  $\vdots$ &  &   & $\ddots$  & $\ddots$   &  &  &\\\hline
  $\vdots$ &   &   &   &$\ddots$ & $\ddots$   &  &\\\hline
  $\vdots$ &   &   &   & & $\ddots$ & $\ddots$   &\\\hline
  $V_{2w-1}$ &  &  &  &  &  & $v_{2w-1}$ & $v_{2w-1}$ \\\hline
  $V_{2w}$ & $v_{2w}$ &  &  &  &  &  & $v_{2w}$ \\
  \hline
\end{tabular}
  \end{center}
   \caption{Rainbow $2w$-cycle}\label{rain2w-cycle}
    \end{table}
\end{example}

\begin{lemma}[Graph removal lemma, see for example \cite{graphremovallemmas}]\label{graphremoval}
  For any graph $G$ and any $\epsilon>0$, there exists $\delta>0$ such that
  any graph on $n$ vertices which contains at most $\delta n^{v(G)}$ copies of $G$ can be made $G$-free by removing at most $\epsilon n^2$ edges.
\end{lemma}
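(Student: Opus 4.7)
The statement is the classical Graph Removal Lemma, originally due to Ruzsa and Szemer\'edi for $G=K_3$ and generalized by Erd\H{o}s, Frankl, and R\"odl to arbitrary fixed $G$. The plan is to derive it from Szemer\'edi's regularity lemma together with the companion counting (or embedding) lemma, via the standard three-step \emph{regularize--clean--count} scheme.

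First I would apply the regularity lemma to the $n$-vertex graph with a regularity parameter $\epsilon'$ chosen sufficiently small as a function of $\epsilon$ and the fixed graph $G$, obtaining an equitable partition $V_1\cup\cdots\cup V_k$ of the vertex set in which all but at most $\epsilon'\binom{k}{2}$ pairs are $\epsilon'$-regular; here $k$ depends only on $\epsilon'$ and not on $n$. Next I would perform a cleaning step: delete (i) every edge lying inside some $V_i$, (ii) every edge across an irregular pair $(V_i,V_j)$, and (iii) every edge across a pair $(V_i,V_j)$ whose density falls below a threshold $\eta=\eta(\epsilon,G)$ to be fixed later. A routine computation shows that, provided $\epsilon'$ and $1/k$ are sufficiently small, the total number of edges removed is at most $\epsilon n^2$.

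Then I would argue by contradiction: suppose the cleaned graph $G'$ still contains a copy of $G$. This copy spans at most $v(G)$ of the parts, all of whose pairwise interactions are $\epsilon'$-regular with density at least $\eta$. The counting lemma would force the original graph to contain at least $c\cdot n^{v(G)}$ copies of $G$ across these parts, for some constant $c=c(\eta,\epsilon',G)>0$. Choosing $\delta<c$ then contradicts the hypothesis of at most $\delta n^{v(G)}$ copies, so $G'$ must be $G$-free, as required.

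The main obstacle is the counting lemma itself: proving that $\epsilon'$-regular pairs of density at least $\eta$ support many embeddings of $G$ calls for an inductive vertex-by-vertex embedding in which, at each step, one must retain a linear-size pool of candidate vertices having sufficiently many neighbors into every previously used part. This requires a careful calibration of $\epsilon'$ against $\eta$ and $v(G)$, and repeated use of the defining regularity inequality to discard exceptional sets at each embedding step.
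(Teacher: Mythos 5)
The paper does not prove this lemma at all; it is quoted as a known black-box result with a citation to the survey of Conlon and Fox on graph removal lemmas. Your outline is the standard regularize--clean--count proof via Szemer\'edi's regularity lemma together with the counting lemma, which is exactly how the result is established in the cited literature, and the sketch is correct (including the correct identification of the counting lemma as the technically delicate step). Nothing further is needed for the purposes of this paper, where the lemma is used only as an imported tool.
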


Using the graph removal lemma described in Lemma \ref{graphremoval}, it is straightforward to deduce the following fact: For any given constant $\epsilon>0$, there exists some $\delta(\epsilon)>0$ such that if one must delete at least $\epsilon n^2$ edges to make a graph $H$ with $n$ vertices $G$-free, then $H$ must contain at least $\delta(\epsilon)n^{v(G)}$ copies of $G$.

\begin{theorem}\label{rainbowcycle}
Let $n$ be a sufficiently large positive integer and $\epsilon>0$ be a given positive constant. Let $r$ be a fixed positive integer.
Assume $\ma{H}$ is an $r$-uniform $r$-partite linear hypergraph with $\epsilon n^2$ edges.
Then $\ma{H}$ must contain a rainbow $k$-cycle for every integer $3\le k\le r$.
\end{theorem}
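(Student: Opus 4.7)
Our plan is to apply the graph removal lemma (Lemma~\ref{graphremoval}) with pattern $K_k$ to a carefully constructed auxiliary graph, and to combine this with a counting argument that leverages the linearity of $\ma{H}$.

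First, we may assume $r=k$ without loss of generality: pick any $k$ of the $r$ parts and restrict each hyperedge of $\ma{H}$ to them. The resulting hypergraph is $k$-uniform, $k$-partite, linear, and still has $\epsilon n^2$ edges, since two restrictions sharing two vertices would force the original hyperedges to share those same two vertices, contradicting the linearity of $\ma{H}$. Any rainbow $k$-cycle of the projection lifts to a rainbow $k$-cycle of $\ma{H}$ on the same vertices and original hyperedges.

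Next, form the auxiliary graph $G$ on $V_1\cup\cdots\cup V_k$ by joining two vertices in different parts whenever they appear together in some hyperedge of $\ma{H}$. By linearity, each edge of $G$ belongs to a unique hyperedge, and the $K_k$'s coming from the $\epsilon n^2$ hyperedges are pairwise edge-disjoint (two distinct hyperedges share at most one vertex, hence no edge of $G$). Consequently, any edge set whose deletion makes $G$ $K_k$-free must contain at least one edge from each of these cliques, so at least $\epsilon n^2$ edges must be removed. Applying Lemma~\ref{graphremoval} with pattern $K_k$ then forces $G$ to contain at least $c\,n^k$ copies of $K_k$ for some constant $c=c(\epsilon,k)>0$.

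Call a copy of $K_k$ in $G$ with vertex set $\{u_1,\ldots,u_k\}$ \emph{rainbow} if its $\binom{k}{2}$ edges are owned by $\binom{k}{2}$ distinct hyperedges of $\ma{H}$; equivalently, no hyperedge $E\in\ma{H}$ satisfies $|V(E)\cap\{u_1,\ldots,u_k\}|\ge 3$. To upper bound the non-rainbow copies, we use a union bound over hyperedges: for each $E\in\ma{H}$ and each $S\s[k]$ with $|S|\ge 3$, the number of $K_k$'s whose vertex in $V_i$ equals $E\cap V_i$ for every $i\in S$ is at most $n^{k-|S|}$ (choosing the remaining coordinates freely in their parts). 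Summing over $S$ gives $O_k(n^{k-3})$ non-rainbow copies per hyperedge, hence at most $O_k(\epsilon\,n^{k-1})$ in total. For $n$ sufficiently large, $c\,n^k$ strictly exceeds this quantity, so at least one rainbow $K_k$ exists in $G$; any Hamilton cycle of such a rainbow $K_k$ uses $k$ edges owned by $k$ distinct hyperedges and visits all $k$ parts exactly once, yielding the desired rainbow $k$-cycle in $\ma{H}$. The main technical hurdle is the delicate interplay provided by linearity, which simultaneously guarantees edge-disjointness of the hyperedge cliques (so the removal lemma yields $\Omega(n^k)$ copies of $K_k$) and constrains how many parts a single hyperedge can share with a fixed $K_k$ (keeping non-rainbow copies to $O_k(n^{k-1})$).
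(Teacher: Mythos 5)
Your proposal is correct and follows essentially the same strategy as the paper: build the auxiliary graph of pairwise edge-disjoint cliques coming from the hyperedges, invoke the graph removal lemma to get $\Omega(n^k)$ copies of $K_k$, show only $O(n^{k-1})$ of them can reuse a hyperedge for two of their edges, and extract a rainbow cycle from a surviving clique. The only (cosmetic) difference is that you project to $k$ parts first and work with $K_k$ directly, while the paper finds a rainbow $K_r$ and then passes to a sub-$K_k$; both steps are justified correctly.
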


\begin{proof}
  Note that a complete graph is an ordinary graph whose every two distinct vertices form an edge.
  We use $K_n$ to denote a complete graph on $n$ vertices and use $K_n(X)$ to emphasize the vertex set $X$.
  Let us form an auxiliary graph $\ma{H}^*$ as follows.
   For any $r$-edge $A\in\ma{H}$, consider the complete graph $K_r(A)$ which is defined on the vertex set $A$.
  The new graph $\ma{H}^*$ is an ordinary graph whose edge set is formed by taking together all of the edges of $K_r(A)$; in other words, $\ma{H}^*=\cup_{A\in\ma{H}}K_r(A)$.
  Observe that for any distinct $A,B\in\ma{H}$, it holds that $|A\cap B|\le 1$.
  This simple observation reflects an important property of $\ma{H}^*$, namely, $\ma{H}^*$ contains $|\ma{H}|~(=\epsilon n^2)$ edge-disjoint copies of $K_r$.
  Thus in order to make $\ma{H}^*$ $K_r$-free, at least $\epsilon n^2$ edges of it must be deleted.
  By the graph removal lemma one can infer that $\ma{H}^*$ contains at least $\delta(\epsilon)n^r$ copies of $K_r$,
  where $\delta(\epsilon)$ is a positive constant guaranteed by the graph removal lemma and it depends only on $\epsilon$.

  In what follows we will argue that $\ma{H}^*$ must contain a $K_r^*$ satisfying the property that all the edges of it must be induced by distinct $r$-edges of $\ma{H}$.
  This statement can be proved by the following straightforward counting argument.
  Let us count the number of copies of $K_r$'s in $\ma{H}^*$ which contains at least two edges arising from a same $r$-edge $A\in\ma{H}$.
  First of all, notice that two edges of $K_r$ determine at least $3$ of its vertices, and there are at most $|\ma{H}|~(=\epsilon n^2)$ choices for such an $A\in\ma{H}$.
  Moreover, it is obvious that the remaining $r-3$ vertices of the undetermined $K_r$ can have at most $n^{r-3}$ choices.
  Therefore, the number of such $K_r$'s is at most $\binom{r}{3}\cdot\epsilon n^2\cdot n^{r-3}=\ma{O}(n^{r-1})$, which will be strictly less than $\delta(\epsilon)n^r$ when $n$ is sufficiently large. 

Therefore, we can conclude that there always exists a $K_r^*\s\ma{H}^*$ whose $\binom{r}{2}$ edges are induced by $\binom{r}{2}$ distinct $r$-edges of $\ma{H}$.
It is easy to see that all $r$ vertices of $K_r^*$ are located in $r$ distinct vertex parts of $\ma{H}$, since any two of them are covered by an edge of $\ma{H}$ and $\ma{H}$ is $r$-partite.
For any integer $3\le k\le r$, take an arbitrary $K_k^*\s K_r^*$. This can be done since we have assumed that $k\le r$.
Label the vertices of $K_k^*$ by $\{v_1,\ldots,v_k\}$. Without loss of generality, we can assume that for $1\le i\le k-1$, $\{v_i,v_{i+1}\}\s A_{i,i+1}\in\ma{H}$ and $\{v_k,v_1\}\s A_{k,1}\in\ma{H}$. Obviously, $v_1,A_{1,2},v_2,A_{2,3},v_3\ldots,v_k,A_{k,1},v_1$ form a rainbow $k$-cycle. Therefore, the theorem follows from the obvious contradiction.
\end{proof}

\section{An upper bound for $SHF(\sum_{i=1}^t w_i;n,q,\{w_1,\ldots,w_t\})$}

The goal of this section is to prove the following general upper bound.
\begin{theorem}\label{upperbound}
  Let $\ma{F}$ be an $SHF(\sum_{i=1}^t w_i;n,q,\{w_1,\ldots,w_t\})$, where the $w_i$'s are fixed positive integers and $q$ is a sufficiently large integer. Then for either $t\ge 3$ or $t=2$ and $\min\{w_1,w_2\}\ge 2$, it holds that $n=o(q^2)$.
\end{theorem}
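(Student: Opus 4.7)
The plan is to run an induction on $u=\sum_{i=1}^t w_i$, using Lemma \ref{johnson} to drive the recursion and three base cases: the types $\{1,1,1\}$ and $\{1,1,1,1\}$, for which $C(3,q,\{1,1,1\})$ and $C(4,q,\{1,1,1,1\})$ are already $o(q^2)$ by (\ref{111}) and (\ref{1111}), and the type $\{2,2\}$, which is the genuinely new base case and requires the graph-removal-lemma argument embodied in Theorem \ref{rainbowcycle}.

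For the inductive step I apply Lemma \ref{johnson} with $l=1$, subtracting $1$ from a carefully chosen $w_i$ so that the reduced type $\{w_1,\dots,w_i-1,\dots,w_t\}$ (with any zero entry dropped) still satisfies the hypothesis of the theorem, and in particular avoids the excluded case $t=2$, $\{1,w\}$. For instance, if $t\ge 3$ and some $w_j\ge 2$, subtract $1$ from such a $w_j$; if $t=2$ with $w_1,w_2\ge 2$ but not both equal to $2$, subtract from whichever is $\ge 3$; if the type is $\{1,\dots,1\}$ with $t\ge 4$ entries, subtract from any entry to reach $\{1,\dots,1\}$ with $t-1\ge 3$ entries. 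In every case the reduced problem has $N'=u'=u-1$, lying in the same diagonal regime as the original, so the inductive hypothesis gives $C(u-1,q,\{w_1',\dots,w_{t'}'\})=o(q^2)$ and Lemma \ref{johnson} then yields $C(u,q,\{w_1,\dots,w_t\})\le q+\max\{u-1,o(q^2)\}=o(q^2)$.

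The crux is the new base case $C(4,q,\{2,2\})=o(q^2)$. Realize an $SHF(4;n,q,\{2,2\})$ as a $4$-uniform $4$-partite hypergraph $\ma{H}$ on $4q$ vertices with $n$ edges. By Example \ref{eg4cycle} the $\{2,2\}$-separating property forbids rainbow $4$-cycles in $\ma{H}$, and by Theorem \ref{rainbowcycle} any linear $4$-uniform $4$-partite hypergraph on $4q$ vertices with a positive constant fraction of $q^2$ edges must contain a rainbow $4$-cycle. Thus it suffices to show that deleting $o(q^2)$ columns turns $\ma{H}$ into a linear hypergraph. For each of the $\binom{4}{2}=6$ coordinate pairs $(i,j)$ and each value pair $(a,b)\in[q]^2$ consider the fiber $F_{ij}^{ab}$ of columns taking value $a$ at position $i$ and $b$ at position $j$. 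Restricted to such a fiber, any two disjoint pairs of columns inside it can only be separated using one of the two remaining coordinates, so the projection of $F_{ij}^{ab}$ onto those coordinates is itself $\{2,2\}$-separating on $2$ coordinates, forcing $|F_{ij}^{ab}|\le C(2,q,\{2,2\})$. A direct analysis of $C(2,q,\{2,2\})$ should then yield $\sum_{(i,j),(a,b)}\binom{|F_{ij}^{ab}|}{2}=o(n^2)$, so a greedy or alteration deletion of one column from each bad pair removes only $o(n)$ columns and leaves a linear sub-hypergraph on $(1-o(1))n=\Omega(q^2)$ edges; Theorem \ref{rainbowcycle} then fires and produces a rainbow $4$-cycle, contradicting the $\{2,2\}$-separating property.

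The main obstacle is precisely the $\{2,2\}$ base case: extracting a clean fiber-size bound from the separating property and then verifying that the total number of bad pairs is small enough that a deletion argument preserves order-$q^2$ many columns. Once that technical step is settled, the induction powered by Lemma \ref{johnson} is a clean bookkeeping argument that uniformly handles all remaining admissible types.
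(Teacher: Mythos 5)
Your overall architecture coincides with the paper's: reduce via the Johnson-type inequality with $l=1$ (Lemma \ref{johnsonl=1}) along the diagonal $N=u$ to the base cases $C(3,q,\{1,1,1\})$ and $C(4,q,\{2,2\})$, and handle the latter by passing to a linear sub-hypergraph and invoking the rainbow-cycle consequence of the graph removal lemma (Theorem \ref{rainbowcycle} with Example \ref{eg4cycle}). Your inductive bookkeeping, including the care taken to avoid the excluded type $\{1,w\}$, is correct and is essentially what the paper does.

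The gap is in the linearization step for $\{2,2\}$, which you yourself flag as the crux. The fiber bound $|F_{ij}^{ab}|\le C(2,q,\{2,2\})$ is legitimate (and one can check $C(2,q,\{2,2\})=O(q)$: the associated bipartite graph on $[q]\sqcup[q]$ can contain no path with three edges once $n\ge 4$, so it is a union of stars). But the conclusion you draw from it does not follow. Writing $B$ for the number of ``bad'' pairs of columns agreeing in at least two coordinates and $M=\max_{i,j,a,b}|F_{ij}^{ab}|$, the fiber count gives $B\le\sum_{(i,j),(a,b)}\binom{|F_{ij}^{ab}|}{2}\le 3(M-1)n$, which for $n=\Theta(q^2)$ is $O(q^3)$. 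That is indeed $o(n^2)$, but deleting one column from each bad pair can cost up to $B\gg n$ columns, and a random-retention alteration with probability $p$ keeps only about $pn-p^2B\le n^2/(4B)=O(q)$ columns; in either case nothing of order $q^2$ survives. Nor can the count be sharpened to $B=o(n)$: a single column may lie in $\Theta(q)$ bad pairs, so $B=\Theta(n)$ is genuinely possible, and no bound on the \emph{number} of bad pairs alone certifies that $o(n)$ deletions destroy them all. What is needed --- and what the paper supplies in Lemmas \ref{specialsubfamily} and \ref{linearsubfamily} --- is the structural fact that all bad pairs are concentrated on at most $8q$ columns: iteratively delete any ``special'' column (one having a coordinate shared with at most one other surviving column); a double count over symbols shows at most $8q$ deletions occur; and in the residual family, two columns agreeing in two coordinates would, using the guaranteed multiplicity of every remaining symbol, produce the configurations of Tables \ref{22} and \ref{12}, contradicting $\{2,2\}$-separation. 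Your pair-counting route, as set up, cannot close this step.
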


\begin{remark}
  The only case excluded by Theorem \ref{upperbound} is $t=2$ and $\{w_1,w_2\}=\{1,w\}$.
  We have mentioned that a $\{1,w\}$-separating hash family is equivalent to a $w$-frameproof code.
  We can use Reed-Solomon Codes (see for example, \cite{BL03}) to construct frameproof codes satisfying $C(N,n,q,\{1,w\})=\Omega(q^{\lc\fr{N}{w}\rc})$ for sufficiently large $q$ (say, $q\ge N$).
\end{remark}

We will need some lemmas before presenting the proof of Theorem \ref{upperbound}.
We say a separating hash family is {\it linear} if in its representation matrix any two distinct columns can be separated by at least $N-1$ rows, i.e., they have identical component in at most one row.
 The following corollary is a simple consequence of Theorem \ref{rainbowcycle} and Example \ref{rainbow2w-cycle}.

\begin{corollary}\label{linearshf}
  Let $\ma{F}$ be a linear $SHF(2w;n,q,\{w,w\})$. Then for fixed $w$ and sufficiently large $q$, it holds that $n=o(q^2)$.
\end{corollary}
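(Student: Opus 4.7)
The plan is to translate the linearity hypothesis on $\ma{F}$ into linearity of the associated hypergraph, and then pit Theorem \ref{rainbowcycle} against Example \ref{rainbow2w-cycle} in a standard density/contradiction argument.

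First I would pass from $\ma{F}$ to its associated hypergraph $\ma{H}_\ma{F}$ as described in Section 2.1. Because $\ma{F}$ is a linear $SHF(2w;n,q,\{w,w\})$, any two columns of the representation matrix agree in at most one coordinate, which translates directly into the statement that any two edges of $\ma{H}_\ma{F}$ share at most one vertex. Thus $\ma{H}_\ma{F}$ is a $2w$-uniform, $2w$-partite, linear hypergraph with $|V(\ma{H}_\ma{F})|=2wq$ vertices and $n$ edges.

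Next I would argue by contradiction: suppose $n\neq o(q^2)$, so there exists some constant $c>0$ and arbitrarily large values of $q$ for which a linear $SHF(2w;n,q,\{w,w\})$ exists with $n\ge cq^2$. Setting $M:=2wq$ (the vertex count of $\ma{H}_\ma{F}$) and $\epsilon:=c/(2w)^2$, the hypergraph $\ma{H}_\ma{F}$ has at least $\epsilon M^2$ edges. Since $w$ is fixed and $q$ may be taken as large as we wish, $M$ is sufficiently large to invoke Theorem \ref{rainbowcycle} with $r=2w$. That theorem then guarantees a rainbow $k$-cycle for every $3\le k\le 2w$; in particular a rainbow $2w$-cycle.

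Finally I would appeal to Example \ref{rainbow2w-cycle}, which shows that the presence of such a rainbow $2w$-cycle in $\ma{H}_\ma{F}$ is incompatible with the $\{w,w\}$-separating property of $\ma{F}$: no row of the representation matrix can separate the column sets $\{A_1,A_3,\ldots,A_{2w-1}\}$ and $\{A_2,A_4,\ldots,A_{2w}\}$. This contradicts the defining property of the SHF, proving $n=o(q^2)$. I do not expect any real obstacle: the linearity of $\ma{F}$ is exactly what is needed to make $\ma{H}_\ma{F}$ a linear hypergraph, and the only slightly delicate bookkeeping is the conversion between the density constant $c$ (in terms of $q^2$) and the constant $\epsilon$ (in terms of the vertex count $M^2=(2wq)^2$), which only costs a harmless factor of $(2w)^{-2}$.
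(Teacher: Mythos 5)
Your proposal is correct and follows essentially the same route as the paper: form the $2w$-uniform $2w$-partite linear hypergraph from the representation matrix, apply Theorem \ref{rainbowcycle} to extract a rainbow $2w$-cycle when $n\ge\epsilon q^2$, and contradict the $\{w,w\}$-separating property via Example \ref{rainbow2w-cycle}. The only difference is that you are slightly more careful than the paper about converting the edge-density constant from $q^2$ to the vertex count $(2wq)^2$, which is indeed just a harmless constant factor.
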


\begin{proof}
  Note that the hypergraph $\ma{H}_{\ma{F}}$ defined by the representation matrix of $\ma{F}$ is a $2w$-uniform $2w$-partite linear hypergraph with equal part size $q$.
  If $n\ge\epsilon q^2$ for some positive constant $\epsilon>0$, then by Theorem \ref{rainbowcycle} $\ma{H}_{\ma{F}}$ must contain a rainbow $2w$-cycle.
  This can not happen according to the observation illustrated by Example \ref{rainbow2w-cycle}.
\end{proof}

Note that Corollary \ref{linearshf} only provides an upper bound for linear separating hash families.
In order to obtain an upper bound for general separating hash families, a possible strategy is to argue that every separating hash family must contain a sufficiently large linear subfamily.
In this way the upper bound for linear families can induce a general upper bound.
However, this may not be true for an arbitrary separating hash family. 
Fortunately, we find that with the help of the Johnson-type lemma, to deduce a general upper bound it suffices to show that every $SHF(4;n,q,\{2,2\})$ must contain a sufficiently large linear subfamily.

We say a column $x$ (which can be denoted as $x=(x(1),x(2),x(3),x(4))$) of an $\ma{F}:=SHF(4;n,q,\{2,2\})$ contains a {\it special coordinate} if there exists some row $i$, $1\le i\le 4$, such that there is at most one column $y\in\ma{F}\setminus\{x\}$ satisfying $x(i)=y(i)$. A column of $\ma{F}$ is said to be {\it special} if it contains a special coordinate. 
It can be easily seen that if a column $x$ is not special, then for each $1\le i\le 4$ there exist two distinct columns $y,z\in\ma{F}\setminus\{x\}$ satisfying $x(i)=y(i)=z(i)$.

\begin{lemma}\label{specialsubfamily}
Given an arbitrary $\ma{F}:=SHF(4;n,q,\{2,2\})$, we can obtain a subfamily $\ma{F}^*\s\ma{F}$ which contains no special columns with respect to $\ma{F}^*$ by deleting at most $8q$ columns of $\ma{F}$.
\end{lemma}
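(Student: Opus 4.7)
The plan is to run a greedy deletion process and bound its length by a simple charging argument on the $4q$ row-value slots.

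First, I would define $\ma{G}_0 := \ma{F}$ and iteratively produce $\ma{G}_{k+1}$ from $\ma{G}_k$ as follows. While $\ma{G}_k$ contains a column that is special with respect to $\ma{G}_k$, pick any such column $x$, fix a row $i(x)\in\{1,2,3,4\}$ witnessing its speciality (so the value $x(i(x))$ appears in at most two columns of $\ma{G}_k$, including $x$ itself), and set $\ma{G}_{k+1} := \ma{G}_k\setminus\{x\}$. The process terminates after finitely many steps since $|\ma{G}_k|$ strictly decreases; let $\ma{F}^*$ be the final subfamily. By definition of the stopping condition, $\ma{F}^*$ contains no special columns with respect to $\ma{F}^*$, which is precisely what is required.

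Next I would bound the number of deletions by charging. To each deleted column $x$ I would assign the pair $(i(x),x(i(x)))\in\{1,2,3,4\}\times[q]$, which I call its \emph{witness slot}. I claim that every witness slot $(i,v)$ is charged by at most two deletions. Indeed, observe that as columns are removed the number of columns whose $i$-th coordinate equals $v$ can only decrease. At the moment $x$ is deleted with witness $(i,v)=(i(x),x(i(x)))$, by speciality there are at most two columns of $\ma{G}_k$ with $i$-th coordinate equal to $v$; after removing $x$, at most one such column remains in $\ma{G}_{k+1}$. Any subsequent deletion charged to $(i,v)$ must remove a column whose $i$-th coordinate is $v$, and the pool of such columns never regenerates, so the slot $(i,v)$ can witness at most one further deletion, giving a total of at most two charges.

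Summing the bound $2$ over the $4q$ possible slots $(i,v)$ with $i\in\{1,2,3,4\}$ and $v\in[q]$, the total number of deletions is at most $8q$, so $|\ma{F}\setminus\ma{F}^*|\le 8q$. I do not expect a real obstacle here: the argument rests on the monotonicity of the slot multiplicities under deletion, which is immediate, together with a book-keeping estimate. The only point to be careful about is that the witness row $i(x)$ must be selected at the exact moment $x$ is deleted (using $\ma{G}_k$ rather than $\ma{F}$), since a column that was not special with respect to $\ma{F}$ can become special with respect to a later $\ma{G}_k$; the charging, however, is unaffected because the count at the witness slot is bounded at the time of deletion.
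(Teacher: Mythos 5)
Your proposal is correct and follows essentially the same route as the paper: the same greedy deletion process, followed by a charging argument that assigns each deleted column to a (row, symbol) slot and shows each of the $4q$ slots is charged at most twice. Your monotonicity phrasing and the paper's ``three columns would contradict speciality'' phrasing are two formulations of the same counting step.
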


\begin{proof}
  We will use a greedy algorithm to construct $\ma{F}^*$. Delete $x_1$ from $\ma{F}$ if $x_1$ has a special coordinate in $\ma{F}$. Denote $\ma{F}_1=\ma{F}-\{x_1\}$. In general, if $x_{i+1}\in \ma{F}_i$ has a special coordinate in $\ma{F}_i$, we delete $x_{i+1}$ from $\ma{F}_i$ and then denote $\ma{F}_{i+1}=\ma{F}_i-\{x_{i+1}\}$. Continue the deleting procedure until we get an $\ma{F}^{*}$ with no columns containing a special coordinate in it. We say that some symbol $a\in\{1,\ldots,q\}$ in row $i$ is {\it responsible} for some column $x$ if $x$ is deleted and it contains $x(i)=a$ as a special coordinate when it is deleted. We claim that in order to obtain $\ma{F}^*$ at most $8q$ columns will be deleted from $\ma{F}$.

  To see this, just notice that for $1\le i\le 4$, any symbol $a\in\{1,\ldots,q\}$ in row $i$ can be responsible for at most two columns, since otherwise let $x,y,z$ be the three columns for which $a$ is responsible (assume we delete $x$ first), then by definition it holds that $x(i)=y(i)=z(i)=a$ and hence $a$ is not a special component of $x$ (note that $y,z$ have not yet been deleted) and it cannot be responsible for $x$! On the other hand, any deleted column must contain at least one symbol which is responsible for it. Therefore, a simple double-counting argument yields that we have deleted at most $8q$ columns. \end{proof}

  \begin{remark}\label{property}
    The following property is straightforward: If $\ma{F}^*$ is an $SHF(4;n,q,\{2,2\})$ which contains no special columns, then for any $1\le i\le 4$ and any $x\in\ma{F}$, there exist at least two columns $y,z\in\ma{F}\setminus\{x\}$ satisfying $x(i)=y(i)=z(i)$.
  \end{remark}

\begin{lemma}\label{linearsubfamily}
  Given an arbitrary $\ma{F}:=SHF(4;n,q,\{2,2\})$, we can obtain a linear subfamily by deleting at most $8q$ columns of it.
\end{lemma}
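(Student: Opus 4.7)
My plan is to take $\ma{F}^{\ast}\subseteq\ma{F}$ to be the subfamily produced by Lemma \ref{specialsubfamily}, which deletes at most $8q$ columns and leaves no special columns, and then to argue that $\ma{F}^{\ast}$ is already linear as a $\{2,2\}$-separating hash family. This would immediately yield the lemma; the trivial case $|\ma{F}^{\ast}|\le 3$ can be handled by a bounded number of additional deletions absorbed into the estimate for large $n$.

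To prove the automatic linearity, suppose toward contradiction that distinct $x,y\in\ma{F}^{\ast}$ agree on two rows, which after relabeling may be taken to be rows $1$ and $2$. I will construct $u,v\in\ma{F}^{\ast}\setminus\{x,y\}$ with $u\neq v$ such that the disjoint pair of column sets $\{x,u\}$ and $\{y,v\}$ is separated by no row, contradicting the $\{2,2\}$-separating property. The advantage of this particular splitting is that rows $1$ and $2$ automatically fail to separate, since $\{x(i),u(i)\}\cap\{y(i),v(i)\}\ni x(i)=y(i)$ for $i=1,2$, so only rows $3$ and $4$ require engineering. Writing $S_i(a)=\{z\in\ma{F}^{\ast}:z(i)=a\}$, Remark \ref{property} guarantees $|S_i(z(i))|\geq 3$ throughout. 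Since $x\neq y$, they differ on at least one of rows $3,4$. If they differ on exactly one, say $x(3)=y(3)$ but $x(4)\neq y(4)$, then row $3$ is already handled; I pick $u\in S_4(y(4))\setminus\{x,y\}$, which is non-empty because $y$ is non-special and $x\notin S_4(y(4))$, together with any $v\in\ma{F}^{\ast}\setminus\{x,y,u\}$, and the equality $u(4)=y(4)$ resolves row $4$. If both rows $3,4$ distinguish $x$ from $y$, I instead pick $u\in S_3(y(3))\setminus\{x,y\}$ and then $v\in S_4(x(4))\setminus\{x,y,u\}$, whose existence is guaranteed by the same non-specialness applied to rows $3$ and $4$; the equalities $u(3)=y(3)$ and $v(4)=x(4)$ populate the two remaining row intersections.

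The main technical hurdle is the last subcase, where both the row $3$ and row $4$ constraints must be met simultaneously from a pool of only two guaranteed columns per coordinate value. Verifying non-emptiness of $S_4(x(4))\setminus\{x,y,u\}$ after $u$ has been chosen is the one step that could in principle fail; it is saved precisely by the word ``two'' (rather than ``one'') in Remark \ref{property}, since $|S_4(x(4))\setminus\{x,y\}|\ge 2$ still leaves at least one candidate even if $u$ happens to land in this set. In each case $x,y,u,v$ are pairwise distinct, so $\{x,u\}$ and $\{y,v\}$ form a disjoint $\{2,2\}$-pair unseparated by any row, the desired contradiction.
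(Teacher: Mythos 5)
Your proof is correct and takes essentially the same route as the paper: pass to the non-special subfamily of Lemma \ref{specialsubfamily} and use Remark \ref{property} to assemble two disjoint column pairs that no row separates whenever two columns agree in two rows. The only cosmetic difference is the case split (yours is on whether $x,y$ also agree in row $3$ or $4$, the paper's on whether a fourth column $w$ with $w(4)=y(4)$ exists), and yours always produces a genuine $\{2,2\}$-violation instead of falling back on a $\{1,2\}$-violation in the degenerate case.
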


\begin{proof}
  By Lemma \ref{specialsubfamily} one can deduce that there exists an $\ma{F}^*\s\ma{F}$ with size at least $n-8q$ which contains no special columns with respect to $\ma{F}^*$. We claim that $\ma{F}^*$ must be linear. Assume, to the contrary, that there exist two distinct columns $x,y\in\ma{F}^*$ which contain at least two identical components. Without loss of generality, assume that $x(1)=y(1)$ and $x(2)=y(2)$. Pick a column $z\in\ma{F}^*\setminus\{x,y\}$ such that $x(3)=z(3)$. Note that the existence of such a $z$ is guaranteed by Remark \ref{property}. Now let us consider $y(4)$. On one hand, if there exists some column $w\in\ma{F}^*\setminus\{x,y,z\}$ such that $w(4)=y(4)$, then obviously no row of $\ma{F}^*$ can separate $\{x,w\}$ and $\{y,z\}$ (see Table \ref{22} for an illustration of the proof), a contradiction. On the other hand, if such a $w$ does not exist, then by Remark \ref{property} one can infer that $x(4)=y(4)=z(4)$, which implies that no row can separate $\{x\}$ and $\{y,z\}$ (see Table \ref{12} for an illustration of the proof), a contradiction, too.

  \begin{table}[ht]
  \begin{center}
    \begin{tabular}{|c|c|c|c|c|}
      \hline
       & $x$ & $y$ & $z$ & $w$ \\\hline
      $V_1$ & $x(1)$ & $y(1)$ & &\\\hline
      $V_2$ &  $x(2)$ & $y(2)$ && \\\hline
      $V_3$ & $x(3)$ &  & $z(3)$ & \\\hline
      $V_4$ &  & $y(4)$ &  & $w(4)$\\\hline
    \end{tabular}
  \end{center}
   \caption{No row can separate $\{x,w\}$ and $\{y,z\}$}\label{22}
    \end{table}



      \begin{table}[ht]
  \begin{center}
    \begin{tabular}{|c|c|c|c|}
      \hline
       & $x$ & $y$ & $z$  \\\hline
      $V_1$ & $x(1)$ & $y(1)$ & \\\hline
      $V_2$ &  $x(2)$ & $y(2)$ &\\\hline
      $V_3$ & $x(3)$ &  & $z(3)$ \\\hline
      $V_4$ & $x(4)$ & $y(4)$ & $z(4)$  \\\hline
    \end{tabular}
  \end{center}
   \caption{No row can separate $\{x\}$ and $\{y,z\}$}\label{12}
    \end{table}
\end{proof}

\begin{theorem}\label{shf22}
    Let $\ma{F}$ be an $SHF(4;n,q,\{2,2\})$. Then for sufficiently large $q$, it holds that $n=o(q^2)$.
\end{theorem}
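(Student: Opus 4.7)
The proof is essentially a two-step reduction combining the already-developed tools. First I would note that the separating property is monotone with respect to taking subfamilies: if $\ma{F}$ is an $SHF(4;n,q,\{2,2\})$ and $\ma{F}^*\subseteq\ma{F}$, then $\ma{F}^*$ is automatically an $SHF(4;|\ma{F}^*|,q,\{2,2\})$, since any two disjoint pairs of columns in $\ma{F}^*$ are also disjoint pairs of columns in $\ma{F}$ and hence admit a separating row.

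The plan is to apply Lemma \ref{linearsubfamily} to extract a linear subfamily $\ma{F}^*\subseteq\ma{F}$ of size at least $n-8q$. By the monotonicity observation above, $\ma{F}^*$ is a linear $SHF(4;|\ma{F}^*|,q,\{2,2\})$. Now Corollary \ref{linearshf} specialized to $w=2$ says that any such linear family satisfies $|\ma{F}^*|=o(q^2)$ for $q$ sufficiently large. Combining these two facts yields
\begin{equation*}
n \le |\ma{F}^*| + 8q = o(q^2) + O(q) = o(q^2),
\end{equation*}
which is exactly the desired conclusion.

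There is no real obstacle here, because all the hard work has been absorbed into the preceding results: Corollary \ref{linearshf} already packages the graph-removal-lemma argument of Theorem \ref{rainbowcycle} together with the rainbow $4$-cycle obstruction of Example \ref{eg4cycle}, and Lemma \ref{linearsubfamily} (via Lemma \ref{specialsubfamily}) takes care of the passage from an arbitrary family to a linear one at the cost of only $O(q)$ columns. The only conceptual point worth emphasizing in the write-up is that the $o(q^2)$ deficiency dominates the additive $8q$ correction, so the linearization cost is negligible in the asymptotic regime.
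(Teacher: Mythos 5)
Your proposal is correct and follows exactly the paper's own argument: extract a linear subfamily of size at least $n-8q$ via Lemma \ref{linearsubfamily}, then apply Corollary \ref{linearshf} to conclude $n-8q=o(q^2)$. The monotonicity observation you spell out is implicit in the paper but worth stating.
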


\begin{proof}
  By Lemma \ref{linearsubfamily} one can deduce that there exists a linear subfamily $\ma{F}^*\s\ma{F}$ such that $|\ma{F}^*|\ge n-8q$. Corollary \ref{linearshf} implies that $n-8q=o(q^2)$ and hence the theorem follows trivially.
\end{proof}

\begin{remark}
  It is worth mentioning that in \cite{ippe} it was shown $C(4,q,\{2,2\}+\{1,1,1\})=o(q^2)$, where $\{2,2\}+\{1,1,1\}$ means that a family is simultaneously $\{2,2\}$-separating and $\{1,1,1\}$-separating. Theorem \ref{shf22} is stronger than their result in the sense that we show $\{2,2\}$-separating property already guarantees the $o(q^2)$ magnitude.
\end{remark}

The following lemma is a simple consequence of Lemma \ref{johnson} by taking $N=\sum_{i=1}^t w_i$ and $l=1$.

\begin{lemma} \label{johnsonl=1}
   Let $u=\sum_{i=1}^t w_i$, then it holds that $$C(u,q,\{w_1,...,w_t\})\le q+(u-1)+C(u-1,q,\{w_1-1,...,w_t\}).$$ In fact, in the right hand side of the inequality we can choose the minus of 1 to be after an arbitrary $w_i,~1\le i\le t$.
\end{lemma}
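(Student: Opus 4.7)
The plan is straightforward, since as noted just before the statement, Lemma \ref{johnsonl=1} is an immediate specialization of the more general Johnson-type recursive inequality (Lemma \ref{johnson}). First I would set $N=u=\sum_{i=1}^{t}w_i$ and $l=1$ in Lemma \ref{johnson}. This yields directly
\[
C(u,q,\{w_1,\ldots,w_t\})\le q+\max\bigl\{u-1,\;C(u-1,q,\{w_1-1,\ldots,w_t\})\bigr\}.
\]
The only remaining step is to dispense with the max. Since $u-1\ge 0$ and $C(u-1,q,\{w_1-1,\ldots,w_t\})\ge 0$ are both nonnegative integers, we have the trivial bound $\max\{a,b\}\le a+b$, which gives
\[
\max\bigl\{u-1,\;C(u-1,q,\{w_1-1,\ldots,w_t\})\bigr\}\le (u-1)+C(u-1,q,\{w_1-1,\ldots,w_t\}),
\]
and combining with the previous line produces exactly the claimed inequality.

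The final assertion, namely the freedom to perform the $-1$ on an arbitrary index $w_i$ rather than only on $w_1$, is inherited verbatim from the analogous clause in Lemma \ref{johnson}; no additional argument is needed beyond re-labeling the coordinates. There is no real obstacle here: the content of the statement is entirely packed into Lemma \ref{johnson}, and the present lemma is simply the most useful $l=1$, $N=u$ corollary, rewritten by weakening $\max\{\cdot,\cdot\}$ to a sum so as to produce a clean additive recurrence that is convenient to iterate in subsequent sections of the paper.
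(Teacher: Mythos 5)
Your proposal is correct and matches the paper exactly: the paper itself introduces this lemma as "a simple consequence of Lemma \ref{johnson} by taking $N=\sum_{i=1}^t w_i$ and $l=1$," and your additional observation that $\max\{a,b\}\le a+b$ for nonnegative $a,b$ is precisely the (implicit) weakening needed to pass from the max in Lemma \ref{johnson} to the sum in the stated bound. Nothing further is required.
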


Now we are able to prove the main theorem of this section.


\begin{proof}[\textbf{Proof of Theorem \ref{upperbound}}]
  Let $u=\sum_{i=1}^t w_i$. For $t\ge 3$, by applying Lemma \ref{johnsonl=1} repeatedly for $u-3$ times one can infer
  $$C(u,q,\{w_1,...,w_t\})\le (u-3)q+(u-3)(u-1)+C(3,q,\{1,1,1\}),$$ which implies that $C(u,q,\{w_1,...,w_t\})=o(q^2)$ since by (\ref{111}) we have $C(3,q,\{1,1,1\})=o(q^2)$.

  For $t=2$ and $\min\{w_1,w_2\}\ge2$, by applying Lemma \ref{johnsonl=1} repeatedly for $u-4$ times one can infer
  $$C(u,q,\{w_1,...,w_t\})\le (u-4)q+(u-4)(u-1)+C(4,q,\{2,2\}),$$ which implies that $C(u,q,\{w_1,...,w_t\})=o(q^2)$ since by Theorem \ref{shf22} we have $C(4,q,\{2,2\})=o(q^2)$.
\end{proof}

One may wonder that whether the upper bound obtained by Theorem \ref{upperbound} is tight. The following result collects all the known tight cases of Theorem \ref{upperbound}.

\begin{proposition}\label{lowerbd}
For sufficiently large $q$, it holds that $$q^{2-o(1)}<C(3,q,\{1,1,1\})=o(q^2)$$ and $$q^{2-o(1)}<C(4,q,\{1,1,1,1\})\le C(4,q,\{1,1,2\})\le C(4,q,\{2,2\})=o(q^2).$$
\end{proposition}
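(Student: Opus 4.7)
The plan is to assemble Proposition \ref{lowerbd} from ingredients already available in the excerpt, together with one easy general observation. First I would quote the two results (\ref{111}) and (\ref{1111}) from \cite{shf} to obtain the lower bounds $q^{2-o(1)} < C(3,q,\{1,1,1\})$ and $q^{2-o(1)} < C(4,q,\{1,1,1,1\})$, as well as the upper bound $C(3,q,\{1,1,1\}) = o(q^2)$. Next, I would invoke Theorem \ref{shf22} for the upper bound $C(4,q,\{2,2\}) = o(q^2)$.

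The only remaining piece is the monotonicity chain
\[ C(4,q,\{1,1,1,1\}) \le C(4,q,\{1,1,2\}) \le C(4,q,\{2,2\}). \]
To establish this, I would prove the general fact that merging blocks of the type only weakens the separating property: if $\ma{F}$ is an $SHF(N;n,q,\{w_1,\ldots,w_t\})$ and $\{w_1',\ldots,w_s'\}$ is obtained from $\{w_1,\ldots,w_t\}$ by merging two (or more) of its entries, then $\ma{F}$ is also $\{w_1',\ldots,w_s'\}$-separating. The argument is transparent: given disjoint column sets $C_1',\ldots,C_s'$ of the coarser sizes, subdivide each merged $C_j'$ back into blocks of the original sizes, apply the stronger $\{w_1,\ldots,w_t\}$-separating hypothesis to obtain a row $f$ that separates the refined partition, and note that the same $f$ a fortiori separates the coarser partition (since unions of pairwise disjoint sets are disjoint). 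Applying this twice, first to merge two singletons in $\{1,1,1,1\}$ into a block of size $2$, and then to merge the two remaining singletons in $\{1,1,2\}$ into a second block of size $2$, yields the displayed chain.

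There is no real obstacle: the nontrivial lower and upper bounds are already in hand from (\ref{111}), (\ref{1111}) and Theorem \ref{shf22}, and the monotonicity step is a one-line reduction. Assembling the four pieces immediately produces the full statement, so the only task is to phrase the merging argument cleanly.
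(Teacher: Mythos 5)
Your proposal is correct and follows essentially the same route as the paper: the lower bounds and $C(3,q,\{1,1,1\})=o(q^2)$ come from (\ref{111}) and (\ref{1111}), the upper bound $C(4,q,\{2,2\})=o(q^2)$ from Theorem \ref{shf22}, and the chain of inequalities from the observation that $\{1,1,1,1\}$-separating implies $\{1,1,2\}$-separating implies $\{2,2\}$-separating. Your merging argument is exactly the (unstated) justification behind the paper's one-line observation, so the two proofs coincide.
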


\begin{proof}
   Observe that $\{1,1,1,1\}$-separating implies $\{1,1,2\}$-separating and $\{1,1,2\}$-separating implies $\{2,2\}$-separating. Thus the proposition is a simple consequence of (\ref{111}) and (\ref{1111}).
\end{proof}

%

\section{An upper bound for $SHF(N;n,t,\{w_1,\ldots,w_t\})$}

In this section, we will present an upper bound for $SHF(N;n,t,\{w_1,w_2,\ldots,w_t\})$ (note that here $q=t$).
In the proof we will use the asymptotic $\binom{n}{w}\thickapprox\fr{n^w}{w!}$, where $w$ is the largest value among the $w_i$'s.
Thus our bound is valid for sufficiently large $n$ (compared with the $w_i$'s).
The main result can be stated as the following theorem.

\begin{theorem}\label{tw}
Given a positive integer $t$, let $w_1,\ldots,w_t$ be $t$ fixed integers such that $\min\{w_i\mid 1\le i\le t\}\ge 2$. Denote $u=\sum_{i=1}^t w_i$.
Let $\ma{F}$ be an $SHF(N;n,t,\{w_1,\ldots,w_t\})$ such that $n$ is sufficiently large compared with the $w_i$'s.
Then the following two statements hold.
\begin{itemize}
 \item [(a)] Denote $p^*=\max_{\sum_{i=1}^t p_i=1,~0\le p_i\le 1}\sum_{\pi\in S_t}\prod_{i=1}^tp_{\pi(i)}^{w_i-1}$ and $g(q,j+1)=\fr{q(q-1)\cdots(q-j)}{q^{j+1}}$, where $S_t$ denotes the symmetric group 
 defined on a finite set of $t$ elements. Then it holds that
 \begin{equation*}
   \begin{aligned}
     C(N,t,\{w_1,\ldots,w_t\})&\le C(p^*N,t,\{1,\ldots,1\})+u-t\\
     &\le \min_{0\le j\le t-2}(t-j-1)(\fr{q-j}{t-j-1})^{g(q,j+1)p^*N}+u-t.
   \end{aligned}
 \end{equation*}
 \item [(b)]For $w_1=\cdots=w_t=w\ge2$, it holds that
 \begin{equation*}
   \begin{aligned}
     C(N,t,\{w,\ldots,w\})&\le C(t!(\fr{1}{t})^{t(w-1)}N,t,\{1,\ldots,1\})+u-t\\
     &\le\min\{2^{\fr{(t!)^2N}{t^{tw-1}}},(t-1)(\fr{t}{t-1})^{\fr{t!N}{t^{tw-t}}}\}+u-t.
   \end{aligned}
 \end{equation*}
\end{itemize}
\end{theorem}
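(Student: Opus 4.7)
The plan is to use the probabilistic method to extract a perfect hash subfamily from any $SHF(N;n,t,\{w_1,\ldots,w_t\})$. Given the representation matrix $M$, I will randomly select disjoint column subsets $B_1,\ldots,B_t$ with $|B_i|=w_i-1$ (possible because $\min w_i\ge 2$); these play the role of ``padding'' that reduces the SHF problem to a PHF problem on the remaining $n-(u-t)$ columns, at the cost of a factor $p^*$ in the number of useful rows.

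The structural heart of the argument is the following observation. For any fixed disjoint $B_1,\ldots,B_t$ and any $t$ remaining columns $c_1,\ldots,c_t$, the sets $C_i=\{c_i\}\cup B_i$ are disjoint of sizes $w_i$, so by the SHF property some row $f$ separates them. Then $f(B_1),\ldots,f(B_t)$ are $t$ pairwise disjoint nonempty subsets of $[t]$ and hence must be singletons partitioning $[t]$: $f(B_i)=\{\pi_f(i)\}$ for some $\pi_f\in S_t$. The same counting applied to $f(C_i)=\{f(c_i)\}\cup\{\pi_f(i)\}$ forces $f(c_i)=\pi_f(i)$, so in particular $f(c_1),\ldots,f(c_t)$ are all distinct. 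Calling $f$ \emph{good} (with respect to the chosen padding) if $f(B_1),\ldots,f(B_t)$ form such a singleton-permutation pattern, I conclude that the good rows alone PHF-separate every $t$-tuple of remaining columns.

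Next I will choose $(B_1,\ldots,B_t)$ uniformly at random among disjoint tuples of the prescribed sizes and count good rows in expectation. For a row $f$ with symbol frequencies $p_i^f=|f^{-1}(i)|/n$, the events $\{B_i\subseteq f^{-1}(\pi(i))\text{ for all }i\}$ (indexed by $\pi\in S_t$) are mutually exclusive, and each has probability $(1+o(1))\prod_{i=1}^t (p_{\pi(i)}^f)^{w_i-1}$, the $o(1)$ arising from without-replacement sampling when $n\gg u$. Thus $\Pr[f\text{ is good}]\le (1+o(1))\sum_{\pi\in S_t}\prod_i (p_{\pi(i)}^f)^{w_i-1}\le (1+o(1))p^*$. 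Linearity of expectation then produces a choice of $(B_1,\ldots,B_t)$ with at most $(1+o(1))p^*N$ good rows; these rows form a PHF on $n-(u-t)$ columns, giving $n\le C(p^*N,t,\{1,\ldots,1\})+u-t$ after absorbing the $(1+o(1))$ into the asymptotic form of the PHF bound. The explicit inequality in (a) follows by substituting the Fredman--Koml\'os-type bound (\ref{phfbound}). For (b), AM-GM in the symmetric case gives $p^*=t!\prod_i(1/t)^{w-1}=t!/t^{t(w-1)}$, and substituting into (\ref{phfbound2}) yields the stated inequality.

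The main technical obstacle will be controlling the $(1+o(1))$ slack that arises from the without-replacement sampling and from the asymptotic form of the PHF bound; this is precisely where the hypothesis ``$n$ is sufficiently large compared with the $w_i$'s'' enters. A secondary delicate point worth highlighting is that the singleton-forcing step genuinely requires $\min w_i\ge 2$: if some $w_j=1$ then $B_j=\emptyset$, $f(B_j)=\emptyset$, and the counting argument ``$t$ nonempty pairwise disjoint subsets of $[t]$ are singletons'' collapses, which explains why this hypothesis appears in the theorem.
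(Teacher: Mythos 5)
Your proposal is correct and follows essentially the same route as the paper: randomly choose disjoint padding sets of sizes $w_i-1$, bound the expected number of rows separating them by $p^*N$ via the per-row symbol frequencies, and observe that those rows restricted to the remaining $n-(u-t)$ columns form a perfect hash family, after which the known PHF bounds (\ref{phfbound}) and (\ref{phfbound2}) are substituted. Your ``good row'' singleton-permutation characterization is an equivalent reformulation of ``row separating the padding sets'' when $q=t$, and your AM--GM step replaces the paper's log-concavity argument, but these are only cosmetic refinements of the same proof.
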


\begin{proof}
Without loss of generality, set the alphabet set of $\ma{F}$ to be $[t]:=\{1,\ldots,t\}$.
Denote by $M$ the representation matrix of $\ma{F}$. For each $1\le i\le t$, let $p_i$ be the fraction of the symbol $i$ in $M$.
We can also view $p_i$ as the probability of a randomly chosen entry of $M$ being equal to $i$. It is easy to see that $\sum_{i=1}^t p_i=1$ and $0\le p_i\le 1$.
We pick randomly and independently $t$ disjoint subsets $C_1^{'},\ldots,C_t^{'}\s\ma{F}$ such that $|C_i^{'}|=w_i-1$ for $1\leq i\leq t$.
One can compute that the probability that a row $f$ of $M$ separates $C_1^{'},\ldots,C_t^{'}$ is at most
\begin{equation}\label{pr1}
  \begin{aligned}
    f(p_1,\ldots,p_t):=\fr{\sum_{\pi\in S_t}\prod_{i=1}^{t}\binom{p_{\pi(i)}n}{w_{i}-1}}{\binom{n}{w_1-1}\cdot\binom{n-w_1+1}{w_2-1}\cdots\binom{n-w_1-\cdots-w_{t-1}+t-1}{w_t-1}}.
  \end{aligned}
\end{equation}
\noindent Denote
\begin{equation}
  \begin{aligned}
    p^*:=\max_{\sum_{i=1}^t p_i=1,~0\le p_i\le 1}f(p_1,\ldots,p_t).
  \end{aligned}
\end{equation}
By linearity of expectation, there exist $t$ column sets $C_1^{'},\ldots,C_t^{'}\s\ma{F}$ which are separated by at most $p^*N$ rows of $M$.
Let $T$ be the collection of rows that separate $C_1^{'},\ldots,C_t^{'}\s\ma{F}$ such that $|T|\le p^*N$.
For $t$ distinct columns $c_1,\ldots,c_t\in\ma{F}\setminus(C_1^{'}\cup C_2^{'}\cdots\cup C_t^{'})$, there must exist at least one row $f\in T$ that separates $c_1,\ldots,c_t$,
since otherwise no row of $M$ will separate $C_1^{'}\cup\{c_1\},\ldots,C_t^{'}\cup\{c_t\}$, contradicting the $\{w_1,\ldots,w_t\}$-separating property of $M$.
Therefore, one can conclude that the submatrix formed by rows $T$ and columns $\ma{F}\setminus(C_1^{'}\cup C_2^{'}\cdots\cup C_t^{'})$ must be a representation matrix of an $SHF(|T|;n-(u-t),t,\{1,\ldots,1\})$ (there are $t$ 1's in total). Thus one can infer that
\begin{equation*}
  \begin{aligned}
    n-u+t\le C(|T|,t,\{1,\ldots,1\}),
  \end{aligned}
\end{equation*}
which implies that
\begin{equation}\label{fp}
  \begin{aligned}
    n\le C(|T|,t,\{1,\ldots,1\})+u-t\le C(p^*N,t,\{1,\ldots,1\})+u-t.
  \end{aligned}
\end{equation}
We can express each term in the summation of (\ref{pr1}) as follows
\begin{equation}\label{pr2}
  \begin{aligned}
   \fr{\binom{p_{\pi(1)}n}{w_1-1}}{\binom{n}{w_1-1}}\cdot\fr{\binom{p_{\pi(2)}n}{w_2-1}}{\binom{n-w_1+1}{w_2-1}}\cdots
   \fr{\binom{p_{\pi(t)}n}{w_t-1}}{\binom{n-w_1-\cdots-w_{t-1}+t-1}{w_t-1}}.
  \end{aligned}
\end{equation}
For sufficiently large $n$ one can infer that (\ref{pr2}) attains the maximal only if $\min\{p_i\mid 1\le i\le t\}>a>0$ holds for some constant $a>0$.
Thus we can set $\binom{p_{\pi(i)}n}{w_i-1}\thickapprox\fr{(p_{\pi(i)}n)^{w_i-1}}{(w_i-1)!}$ for each $1\le i\le t$.
One can compute that $f(p_1,\ldots,p_t)$ approximates
\begin{equation}\label{pr3}
  \begin{aligned}
    \sum_{\pi\in S_t}\prod_{i=1}^tp_{\pi(i)}^{w_i-1},
  \end{aligned}
\end{equation}
which together with (\ref{phfbound}) and (\ref{fp}) complete the first part of the theorem.
To prove the second part of the theorem 
for the special case $w_1=\cdots=w_t=w$, by taking the logarithm of $\prod_{i=1}^{t}\binom{p_{\pi(i)}n}{w-1}$ and using the concave property of $\log(\cdot)$ one can infer that $f(p_1,\ldots,p_t)$ attains its maximality $p^*$ if and only if $p_1=\cdots=p_t=\fr{1}{t}$.
For sufficiently large $n$, using (\ref{pr3}) one can infer that $p^*$ approximates
\begin{equation}
  \begin{aligned}
    t!(\fr{1}{t})^{t(w-1)},
  \end{aligned}
\end{equation}
which together with (\ref{phfbound2}) complete the proof of the second part of the theorem.
\end{proof}

\section{A tight upper bound for $SHF(N;2,q,\{1,w\})$}

In this section we will provide a lower bound for the minimal $N$ such that there exists an $SHF(N;n,2,\{1,w\})$ with $n>N$, which can be stated as the following theorem.
\begin{theorem}\label{shangguanbinary}
    For all $w\geq 3$ and for all $N<\fr{15+\sqrt{33}}{24} (w-2)^2$, it holds that $C(N,q,\{1,w\})\le N$. Or equivalently, $N(w)\ge\fr{15+\sqrt{33}}{24} (w-2)^2$.
\end{theorem}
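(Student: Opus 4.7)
My plan is to deduce Theorem~\ref{shangguanbinary} from Lemma~\ref{grouptesting} by extracting a $(w-2)$-cover-free family from a hypothetical $SHF(N;n,2,\{1,w\})$ with $n>N$; then $N\ge N^*(w-2)\ge\fr{15+\sqrt{33}}{24}(w-2)^2$ follows at once.

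First I would normalise the representation matrix $M$ of $\ma{F}$ by deleting every row of weight $0$ or $n$; such rows contribute nothing to separation, so $\ma{F}$ remains $\{1,w\}$-separating, $n$ is preserved, and the ground set only shrinks (which is fine for lower-bounding $N$). Partition the surviving rows as $L=\{r:\mathrm{wt}(r)\le w-1\}$ and $H=\{r:\mathrm{wt}(r)\ge w\}$. The heart of the reduction is to pick two ``pivot'' columns $A,B\in\ma{F}$ with $A\cap B\s L$ (equivalently $A\cap B\cap H=\emptyset$), and to declare $\ma{F}''=\ma{F}\setminus\{A,B\}$, viewed as a subfamily of $2^{[N]}$, as the candidate $(w-2)$-CFF.

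Given such a pair, the verification is clean. For any distinct $C_0,C_1,\ldots,C_{w-2}\in\ma{F}''$, apply the $\{1,w\}$-separating condition to the tuple $(C_0;\,C_1,\ldots,C_{w-2},A,B)$: some row $r$ satisfies either \textup{(U)} $r\in C_0\setminus(C_1\cup\cdots\cup C_{w-2}\cup A\cup B)$, whence $C_0\ns C_1\cup\cdots\cup C_{w-2}$ immediately; or \textup{(I)} $r\in A\cap B\cap C_1\cap\cdots\cap C_{w-2}\setminus C_0$, which forces $r\in A\cap B\s L$ while simultaneously placing $r$ in the $w$ distinct sets $A,B,C_1,\ldots,C_{w-2}$, contradicting $\mathrm{wt}(r)\le w-1$. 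Hence \textup{(U)} always holds and $\ma{F}''$ is a $(w-2)$-CFF on $[N]$ of size $n-2$. Provided $n-2>N$ (the sporadic boundary cases $n\in\{N+1,N+2\}$ may be handled by a one-pivot variant in which a single column $A\s L$ is removed to produce a $(w-1)$-CFF, yielding the even stronger bound $N\ge \fr{15+\sqrt{33}}{24}(w-1)^2$), Lemma~\ref{grouptesting} then delivers $N\ge N^*(w-2)\ge\fr{15+\sqrt{33}}{24}(w-2)^2$.

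The main obstacle I anticipate is the existence step for a pivot pair $A,B$ with $A\cap B\s L$. A naive double count---each $r\in H$ contributes $\binom{\mathrm{wt}(r)}{2}$ ``bad'' pairs sharing $r$, for a total of at most $\sum_{r\in H}\binom{\mathrm{wt}(r)}{2}$ out of $\binom{n}{2}$ possible pairs---does not by itself rule out the failure scenario, so $n>N$ must be exploited more carefully. A plausible route is to first select $A\in\ma{F}$ minimising $|A\cap H|$ and then search for $B\in\ma{F}$ with $B\cap(A\cap H)=\emptyset$, and to argue that if every such choice fails then the resulting sunflower-like configuration on $H$ forces $n\le N$, contradicting the hypothesis. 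The careful bookkeeping required to keep $|\ma{F}''|>N$ alongside this pivot-existence argument is, in my view, where the delicacy of the proof lies.
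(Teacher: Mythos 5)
Your reduction target is the right one---the paper also proves the theorem by showing $N(w)\ge N^*(w-2)$ and then invoking Lemma~\ref{grouptesting}---and your verification that $\ma{F}''=\ma{F}\setminus\{A,B\}$ is a $(w-2)$-cover-free family is correct \emph{conditional on} the existence of a pivot pair $A,B$ with $A\cap B\s L$. But that existence step is the crux of your argument, and you have not proved it: you explicitly note that the naive double count fails and only sketch a ``plausible route'' (minimise $|A\cap H|$, then hope for a sunflower-type contradiction) without carrying it out. This is a genuine gap, not a routine detail. The hypothesis $n>N$ must enter precisely here: the complement of the $N\times N$ identity matrix is a $\{1,w\}$-separating matrix with $n=N$ in which every pair of columns meets in $N-2$ rows of weight $N-1\ge w$, so no pivot pair exists there, and nothing in your outline shows how $n>N$ forces two columns whose common support avoids $H$. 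The boundary cases are likewise only waved at: the one-pivot variant again rests on an unproved existence claim (a column $A\s L$), and for $n=N+1$ it produces only a $CFF(N;N,w-1)$, which fails the strict inequality needed to invoke $N^*(w-1)$.

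The paper circumvents all of this with a doubling trick (Lemmas~\ref{cffandfpc} and~\ref{fpctightbd}): replacing each entry $0$ by $10$ and each $1$ by $01$ turns the representation matrix into a $CFF(2N;n,w)$ in which \emph{every} column has weight exactly $N$. Lemma~\ref{cffpro} can then be applied to an \emph{arbitrary} column---no pivot search is needed---to get a $CFF(N;n-1,w-1)$; a short separation argument shows at most one column of the new matrix has weight $\le 1$, so a second application to a column of weight $\ge 2$ yields a $CFF(N';n-2,w-2)$ with $N'\le N-2<n-2$, whence $N\ge N^*(w-2)$ and the theorem follows from Lemma~\ref{grouptesting}. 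To salvage your direct approach you would need a complete proof that $n>N$ forces your pivot pair; otherwise the doubling construction is the way to go, since it trades your single delicate choice for two mechanical deletions.
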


\begin{remark}
  Indeed we have a tight bound $C(N,q,\{1,w\})=N$ for $N<\fr{15+\sqrt{33}}{24} (w-2)^2$, since an $N\times N$ identity matrix satisfies the $\{1,w\}$-separating property.
\end{remark}
We will need a few lemmas to prove Theorem \ref{shangguanbinary}. 

%

\begin{lemma}\label{cffandfpc}
    Every $CFF(N;n,w)$ is also an $SHF(N;n,2,\{1,w\})$ and every $SHF(N;n,2,\{1,w\})$ induces a $CFF(2N;n,w)$.
\end{lemma}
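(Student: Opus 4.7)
The plan is to prove the two directions separately, exploiting the fact that the alphabet has only two symbols, which we may take to be $\{0,1\}$.

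For the first direction, suppose $M^{*}$ is the representation matrix of a $CFF(N;n,w)$. To verify the $\{1,w\}$-separating property, pick any column $c_{0}$ and any pairwise disjoint column set $\{c_{1},\ldots,c_{w}\}$. Interpreting columns as members of $\mathcal{F}$, the cover-free property tells us $A_{0}\nsubseteq A_{1}\cup\cdots\cup A_{w}$, so there is some coordinate (row) $i$ where $M^{*}_{i,c_{0}}=1$ while $M^{*}_{i,c_{j}}=0$ for every $1\le j\le w$. This single row separates $\{c_{0}\}$ from $\{c_{1},\ldots,c_{w}\}$ in the sense of hash families, so $M^{*}$ is an $SHF(N;n,2,\{1,w\})$.

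For the converse, the key trick is a doubling/complementation argument. Let $M$ be the representation matrix of an $SHF(N;n,2,\{1,w\})$, and form the $2N\times n$ binary matrix $M'$ by stacking $M$ on top of its bitwise complement $\overline{M}$. Fix any column $c_{0}$ and any disjoint set $\{c_{1},\ldots,c_{w}\}$. The separating property gives a row $f$ of $M$ with $\{M_{f,c_{0}}\}\cap\{M_{f,c_{1}},\ldots,M_{f,c_{w}}\}=\emptyset$. Because the alphabet is binary, this forces one of exactly two scenarios: either $M_{f,c_{0}}=1$ and $M_{f,c_{j}}=0$ for every $j$, in which case the row $f$ of $M$ itself witnesses the cover-free condition in $M'$; or $M_{f,c_{0}}=0$ and $M_{f,c_{j}}=1$ for every $j$, in which case the corresponding complemented row in $\overline{M}$ flips to $1$ at $c_{0}$ and to $0$ at each $c_{j}$, again giving the desired witness. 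In either case some row of $M'$ hits $c_{0}$ with a $1$ and misses all of $c_{1},\ldots,c_{w}$, so $M'$ is the representation matrix of a $CFF(2N;n,w)$.

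There is no real obstacle here; the only point that requires attention is the asymmetry between the definitions: the hash-family separation is symmetric in the two symbols $\{0,1\}$, while the cover-free condition distinguishes the symbol $1$ (``belongs to the set'') from the symbol $0$. Complementation restores the symmetry and accounts precisely for the factor of $2$ in the number of rows, which is why the implication produces $2N$ rather than $N$ rows.
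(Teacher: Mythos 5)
Your proof is correct and essentially identical to the paper's: the first direction is the same direct verification, and your matrix $M'$ (stacking $M$ on its bitwise complement) is, up to a permutation of rows, exactly the paper's construction of replacing each $0$ by $10$ and each $1$ by $01$, with the same two-case analysis of the separating row.
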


\begin{proof}
    Denote by $M$ and $M^*$ the representation matrices of an $SHF(N;n,2,\{1,w\})$ and a $CFF(N;n,w)$ respectively. Given $M^*$, by the $w$-cover-free property it holds that for each column and arbitrary $w$ other columns there exists a row in which the first column is 1 and the remaining $w$ columns are all 0. If we view the columns of $M^*$ as columns of some binary separating hash families, then $M^*$ apparently satisfies the $\{1,w\}$-separating property, which implies that $M^*$ also represents an $SHF(N;n,2,\{1,w\})$.

    On the other hand, given $M$, by replacing the 0 entry in $M$ by 10 and the 1 entry by 01, we obtain a $2N\times n$ matrix which is denoted by $M_1$.
    It suffices to verify that $M_1$ is a representation matrix of a $CFF(2N;n,w)$.
    For each column and arbitrary $w$ other columns of $M_1$, let us consider the corresponding columns of $M$.
    By the $\{1,w\}$-separating property, for these $w+1$ columns there is a row of $M$ having the configuration $10\cdots 0$ or $01\cdots 1$, which is translated to
    $$\left(\begin{array}{cccc}
        0 & 1 & \cdots & 1 \\
        1 & 0 & \cdots & 0
      \end{array}\right)~or~
      \left(\begin{array}{cccc}
        1 & 0 & \cdots & 0 \\
        0 & 1 & \cdots & 1
      \end{array}\right),
    $$
    \noindent in $M_1$. Note that the second row of the first submatrix and the first row of the second submatrix satisfy the $w$-cover-free property.
    If we view $M_1$ as a representation matrix for some $\ma{F}\s2^X$, where $|X|=2N$, then by the above discussions we can conclude that $\ma{F}$ is $w$-cover-free.
\end{proof}

\begin{lemma}\label{fpctightbd}
   Denote $N^*(w)$ the minimal $N$ such that there exists a $CFF(N;n,w)$ with $n>N$. And denote $N(w)$ the minimal $N$ such that there exists an $SHF(N;n,2,\{1,w\})$ with $n>N$. Then for $w\ge3$, it holds that $N^*(w-2)\le N(w)\le N^*(w)$.
\end{lemma}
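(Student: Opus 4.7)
The plan is to handle the two inequalities separately. The upper bound $N(w)\le N^*(w)$ follows immediately from the first assertion of Lemma \ref{cffandfpc}: if a $CFF(N^*(w);n,w)$ exists with $n>N^*(w)$, the same $\{0,1\}$-matrix is already an $SHF(N^*(w);n,2,\{1,w\})$ with $n>N^*(w)$, hence $N(w)\le N^*(w)$.

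For the lower bound $N^*(w-2)\le N(w)$, set $N:=N(w)$ and fix an $SHF(N;n,2,\{1,w\})$ $\ma{F}$ with $n>N$. The row-doubling construction in the proof of Lemma \ref{cffandfpc} produces a $CFF(2N;n,w)$ $\ma{G}$ in which every member is a transversal of the $N$ pairs of rows $\{2i-1,2i\}$; in particular every member has weight exactly $N$, and for any two distinct $A,B\in\ma{G}$ corresponding to columns $a,b$ of $\ma{F}$, one has $|A\cap B|=N-d_H(a,b)\le N$, where $d_H$ is the Hamming distance. I now apply Lemma \ref{cffpro} twice: deleting a chosen $A\in\ma{G}$ yields a $CFF(N;n-1,w-1)$ on the ground set $X\setminus A$; then deleting $B\setminus A$ (whose weight in this reduced family is $N-|A\cap B|$) for some chosen $B\in\ma{G}\setminus\{A\}$ yields a $CFF(|A\cap B|;n-2,w-2)$. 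If $A,B$ can be chosen so that $n-2>|A\cap B|$, then by definition of $N^*$ we get $N^*(w-2)\le |A\cap B|\le N$, as required.

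The condition $n-2>|A\cap B|$ is equivalent to $d_H(a,b)\ge N-n+3$. When $n\ge N+2$, any pair of distinct columns works since $d_H\ge 1$. In the boundary case $n=N+1$, one needs a pair with $d_H(a,b)\ge 2$, and this is the only delicate step. A short contradiction argument dispatches it: if every pair of columns of $\ma{F}$ had $d_H\le 1$, fix any column $a_0$; any other column differs from $a_0$ in a single coordinate, and two such columns flipping different coordinates differ from each other in exactly $2$ coordinates, while two flipping the same coordinate must coincide. Hence $n\le 2$, contradicting $n>N\ge 3w\ge 9$ (the last inequality is the prior bound from \cite{binaryfpc1} valid for $w\ge 3$). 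The main obstacle of the whole proof is therefore just this last sub-case; the remainder is an essentially mechanical combination of Lemmas \ref{cffandfpc} and \ref{cffpro}.
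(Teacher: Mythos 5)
Your proof is correct and follows essentially the same route as the paper: the upper bound comes from Lemma \ref{cffandfpc}, and the lower bound from doubling the rows into a $CFF(2N;n,w)$ with constant member size $N$ and then applying Lemma \ref{cffpro} twice. The only (harmless) divergence is in the degenerate step guaranteeing that the second deletion shrinks the ground set below $n-2$: the paper shows that after the first deletion at most one column of the resulting $CFF(N;n-1,w-1)$ has weight $1$ (invoking the $\{1,w\}$-separating property), whereas you observe directly that $n>N\ge 3$ distinct binary columns cannot pairwise all be at Hamming distance $1$ --- an equally valid and, if anything, more elementary argument.
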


\begin{proof}
Denote by $M$ the representation matrix of an $SHF(N;n,2,\{1,w\})$ with $N=N(w)$, then we have $n>N$ by the definition of $N(w)$. First of all, the upper bound in the inequality follows from the fact that every $CFF(N;n,w)$ is also an $SHF(N;n,2,\{1,w\})$, which is shown in Lemma \ref{cffandfpc}. It remains to prove the lower bound. Replace the 0 entry in $M$ by 10 and the 1 entry by 01. We obtain a $2N\times n$ matrix with constant column weight $N$. Denote this new matrix by $M_1$. By Lemma \ref{cffandfpc}, $M_1$ is the representation matrix of a $CFF(2N;n,w)$.

By Lemma \ref{cffpro}, deleting from $M_1$ an arbitrary column and the rows containing a 1 in it leads to a new matrix $M_2$, which is the representation matrix of a $CFF(N;n-1,w-1)$.

We claim that there must exist a column in $M_2$ of weight at least two. Denote by $c$ the column deleted from $M_1$. If some column $c'\in M_2$ is of weight one, then one can verify that $c$ and $c'$ have exactly $N-1$ identical components in $M$. If $M_2$ contains two columns of weight 1, then in $M$ there are two distinct columns that have exactly $N-1$ identical components with $c$. Then it is not hard to show that no row of $M$ can separate $c$ and these two columns.
Therefore, $M_2$ contains at most one column of weight 1. The claim follows from the simple fact that $n-1\ge N=N(w)\ge N(3)>6$, where the last inequality follows from (\ref{N(w)shangguan}).

Take an arbitrary column of $M_2$ with weight at least two. Delete from $M_2$ this column and the rows containing a 1 in it. Again, by Lemma \ref{cffpro}, the new matrix is the representation matrix of a $CFF(N';n-2,w-2)$ satisfying $N'\le N-2<n-2$ since we have assumed that $n>N$. Thus one can deduce that $N'\ge N^*(w-2)$ and hence the lower bound $N(w)\ge N^*(w-2)$ follows immediately.
\end{proof}

%

\begin{proof}[\textbf{Proof of Theorem \ref{shangguanbinary}}]
    Theorem \ref{shangguanbinary} is a direct consequence of Lemmas \ref{grouptesting} and \ref{fpctightbd}.
\end{proof}

\section{Concluding remarks}

This paper provides several new upper bounds for separating hash families. The following remaining open problems seem to be interesting.

Theorem \ref{lowerbd} implies that Theorem \ref{upperbound} is tight for several small parameters. For the general situation, the following conjecture seems reasonable.
\begin{conjecture}\label{conjecture1}
  Let $w_1,\ldots,w_t$ be fixed integers such that $\{w_1,\ldots,w_t\}\neq\{1,w\}$ for any $w$. Then for sufficiently large $q$ we have $C(\sum_{i=1}^t w_i,q,\{w_1,\ldots,w_t\})>q^{2-o(1)}$.
\end{conjecture}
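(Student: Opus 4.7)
The plan is to reduce Conjecture~\ref{conjecture1} to a master statement about $u$-perfect hash families of length $u$, and then to attack that master statement with a Behrend/Ruzsa--Szemer\'edi-style construction. The reduction is a simple monotonicity: an $SHF(u;n,q,\{\underbrace{1,\ldots,1}_{u}\})$ is automatically an $SHF(u;n,q,\{w_1,\ldots,w_t\})$ whenever $w_1+\cdots+w_t=u$, since if a single row $f$ attains $u$ pairwise distinct values on any $u$-subset of columns, then splitting those columns into blocks of sizes $w_1,\ldots,w_t$ makes $f(C_1),\ldots,f(C_t)$ pairwise disjoint. Hence
\[
C(u,q,\{w_1,\ldots,w_t\})\ge C(u,q,\{\underbrace{1,\ldots,1}_{u}\}),
\]
and it suffices to prove $C(u,q,\{1,\ldots,1\})>q^{2-o(1)}$ for every fixed $u\ge 3$. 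This also clarifies why $\{1,w\}$ is excluded from the conjecture: the same reduction would land on a $(w+1)$-PHF, but $\{1,w\}$-separating families admit independent Reed--Solomon frameproof constructions that only reach $\Theta(q^{(w+1)/w})$, a threshold strictly below $q^{2-o(1)}$, so the conjectured bound genuinely fails there.

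For the master statement I would imitate the Behrend construction used in \cite{shf} for $u\in\{3,4\}$. Fix a prime $p$ close to $q$ and a $u$-AP-free set $B\subseteq\mathbb{Z}_p$ of size $|B|\ge p^{1-o(1)}$ (such $B$ exists by the sphere-layering argument extended from $3$-APs to longer progressions). Index the $p|B|=q^{2-o(1)}$ columns by pairs $(a,b)\in\mathbb{Z}_p\times B$, and let the entry of column $(a,b)$ in row $i\in[u]$ be $a+(i-1)b\pmod p$. If some $u$ distinct columns $(a_j,b_j)_{j=1}^{u}$ admit no separating row, then each row $i$ furnishes an unordered pair $\{j,k\}$ with $(a_j-a_k)+(i-1)(b_j-b_k)=0$, yielding a ``collision graph'' on $\{1,\ldots,u\}$ whose edges carry the $u$ distinct row labels. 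Chasing these linear relations should force a non-trivial $u$-term arithmetic progression among the $b_j$'s, contradicting the choice of $B$.

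The main obstacle is exactly this last combinatorial chase. For $u=3$ it reduces to the Ruzsa--Szemer\'edi argument underlying $C(3,q,\{1,1,1\})>q^{2-o(1)}$, and for $u=4$ a finite case analysis over collision graphs on four vertices with four distinctly labelled edges (as implicit in \cite{shf}) suffices. For $u\ge 5$, however, the collision graph can take many shapes---short cycles, pairs repeated across rows, disconnected patterns---and not every one of them immediately encodes a genuine $u$-AP. A workable route may be to replace the linear template $a+(i-1)b$ by a degree-$(u-1)$ polynomial template $P_{a,b}(i)$ whose row-collisions are governed by B\'ezout/incidence counts, or to impose a simultaneous Sidon-like condition on $B$ so that short spurious relations are automatically excluded. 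Handling every collision pattern uniformly in $u$, rather than by $u$-specific ad hoc analysis, is where essentially all of the remaining difficulty will lie, and is presumably why Conjecture~\ref{conjecture1} is stated rather than proved.
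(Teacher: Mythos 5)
You should first be clear about the status of the target: this is Conjecture~\ref{conjecture1}, posed in the paper's concluding section as an \emph{open problem} --- the paper contains no proof of it. The only step the paper supplies is exactly your monotonicity reduction (a $u$-perfect hash family of length $u$ is $\{w_1,\ldots,w_t\}$-separating for every type with $\sum_i w_i=u$, so it suffices to show $C(u,q,\{1,\ldots,1\})>q^{2-o(1)}$), together with pointers to rainbow-cycle-free linear hypergraphs (Lemma 6.1 of \cite{shf}) and additive-combinatorial constructions \cite{ge2017sparse}, with the explicit admission that the construction problem ``is not a simple question.'' So your reduction is correct and coincides with the paper's remark; everything after it must stand on its own, and there your proposal has a gap that is more severe than the one you concede.

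The linear template $M[(a,b),i]=a+(i-1)b \bmod p$ is irreparably broken for every $u\ge 4$, \emph{regardless} of how AP-free $B$ is: the degenerate collision patterns with repeated $b$-values, which you mention only in passing, actually occur as soon as $|B|\ge 2$. Concretely, take distinct $b,b'\in B$, set $\delta=b-b'\neq 0$, and consider the four distinct columns
\[
c_1=(a,b),\qquad c_2=(a,b'),\qquad c_3=(a-\delta,b),\qquad c_4=(a+2\delta,b').
\]
One checks directly that $c_1,c_2$ collide in the row with multiplier $0$, that $c_2,c_3$ collide at multiplier $1$, that $c_4,c_1$ collide at multiplier $2$, and that $c_3,c_4$ collide at multiplier $3$; the identity $0+3=1+2$ makes the cycle relation among the $b_j$'s vacuous, so no arithmetic progression is ever forced. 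This is precisely a rainbow $4$-cycle in the sense of Example~\ref{eg4cycle}: no row separates $\{c_1,c_3\}$ from $\{c_2,c_4\}$, so for $u=4$ your construction is not even $\{2,2\}$-separating, let alone a $4$-PHF, and for $u>4$ appending, for each remaining multiplier $m\in\{4,\ldots,u-1\}$, the column $d_m=(a+m\delta,b')$ (which collides with $c_1$ in row $m+1$) yields a $u$-set of columns on which no row is injective. In particular your claim that $u=4$ ``suffices'' by a finite case analysis is false for this template; the known bounds $q^{2-o(1)}<C(3,q,\{1,1,1\})$ and $q^{2-o(1)}<C(4,q,\{1,1,1,1\})$ from \cite{shf} rest on genuinely different constructions. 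Your proposed repair via a Sidon-type condition on $B$ is self-defeating: Sidon sets in $\mathbb{Z}_p$ have size $O(\sqrt{p})$, collapsing the column count to $O(q^{3/2})$, strictly below the target $q^{2-o(1)}$. Finally, the aside about $\{1,w\}$ is backwards: Reed--Solomon codes give $C(w+1,q,\{1,w\})=\Omega\bigl(q^{\lceil (w+1)/w\rceil}\bigr)=\Omega(q^2)$, so that case is excluded from the conjecture because the matching \emph{upper} bound $o(q^2)$ of Theorem~\ref{upperbound} fails there (the truth is $\Theta(q^2)$ and the statement would be trivially true), not because the conjectured lower bound fails.
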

Since $u$-perfect hash family satisfies $\{w_1,\ldots,w_t\}$-separating property for arbitrary $\sum_{i=1}^t w_i=u$,
to verify Conjecture \ref{conjecture1}, it suffices to show that $C(t,q,\{1,\ldots,1\})>q^{2-o(1)}$ (there are $t$ 1's in total) for sufficiently large $q$ and every fixed integer $t$.
Lemma 6.1 of \cite{shf} shows that $t$-uniform $t$-partite linear hypergraphs which contain no rainbow cycles of length $k$ for any $3\le k\le t$ are good candidates for $t$-perfect hash families.
Thus to prove Conjecture \ref{conjecture1} it suffices to construct sufficiently large hypergraphs containing no rainbow cycles.
Unfortunately, this is not a simple question. Some constructions of this type using additive number theory can be found in \cite{ge2017sparse}.

For the upper bound of $SHF(N,n,q,\{w_1,\ldots,w_t\})$ under the situation $(u-1)\nmid N$,
it may be a difficult problem to determine whether $n=\Theta(q^{\lc\fr{N}{u-1}\rc})$ or $n=o(q^{\lc\fr{N}{u-1}\rc})$ for general $w_1,\ldots,w_t$.
We have a conjecture for the case $w_1=\cdots=w_t=1$, which is stated as follows.
\begin{conjecture}\label{conj2}
  Let $t\ge 3$ be a positive integer. For $w_1=\cdots=w_t=1$ and $(t-1)\nmid N$, it holds that $C(N,q,\{1,\ldots,1\})=o(q^{\lc\fr{N}{t-1}\rc})$.
\end{conjecture}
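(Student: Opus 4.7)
The plan is strong induction on $t$, with a nested induction on $N$ for each fixed $t$. For every $t\ge 3$ the base case $N=t$ is already in hand via Theorem \ref{upperbound} specialised to $w_1=\cdots=w_t=1$: iterating Lemma \ref{johnsonl=1} $t-3$ times collapses the type $\{1\}^t$ to $\{1,1,1\}$ and reduces $N=t$ to $N=3$, yielding $C(t,q,\{1\}^t)\le (t-3)q+(t-3)(t-1)+C(3,q,\{1,1,1\})=o(q^2)$, which matches the target since $\lceil t/(t-1)\rceil=2$. So the sole remaining task is the inductive step, where I would handle $N>t$ with $(t-1)\nmid N$.

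For such $N$, write $N=m(t-1)+r$ with $1\le r\le t-2$ and $m\ge 1$, and apply the Johnson-type recursion of Lemma \ref{johnson} with $l=m$, which after subtracting $1$ from one of the $w_i$'s gives
\[
C(N,q,\{1\}^t)\le q^m+\max\bigl\{t-1,\,C(m(t-2)+r,\,q,\,\{1\}^{t-1})\bigr\}.
\]
In the sub-case $1\le r\le t-3$ (only possible when $t\ge 4$), the reduced parameter $N'=m(t-2)+r$ satisfies $\lceil N'/(t-2)\rceil=m+1=\lceil N/(t-1)\rceil$ and $(t-2)\nmid N'$ since $1\le r<t-2$. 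The outer induction hypothesis at level $t-1$ then supplies $C(N',q,\{1\}^{t-1})=o(q^{m+1})$, so the whole right-hand side is $q^m+o(q^{m+1})=o(q^{m+1})$, exactly the desired bound.

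The difficult case is $r=t-2$, i.e.\ $N\equiv -1\pmod{t-1}$. Note this branch contains \emph{every} instance of $t=3$ (where $t-2=1$ forces $r=1$) and persists for all larger $t$. Here the natural choice $l=m$ lands at $N'=(m+1)(t-2)$, which is divisible by $t-2$, so the inductive hypothesis is inapplicable. Every other $l$ in Lemma \ref{johnson}, together with the crude row-pigeonhole bound $C(N,q,\cdot)\le q\cdot C(N-1,q,\cdot)$, yields only the trivial $O(q^{m+1})$. My plan for this branch is to bypass Johnson entirely and mimic the hypergraph-removal argument of Section 3: view the family as an $N$-uniform $N$-partite hypergraph, discard an $O(q^m)$ set of ``special'' columns via a linearising step analogous to Lemma \ref{linearsubfamily}, and then invoke a suitable hypergraph removal lemma to show that the remaining $\Omega(q^{m+1})$ edges force a rainbow-cycle-type sub-configuration that yields a non-separable $t$-tuple of columns, as in Example \ref{rainbow2w-cycle}.

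The main obstacle is precisely this $r=t-2$ branch. Already for $(t,N)=(4,5)$ the question whether $C(5,q,\{1,1,1,1\})=o(q^2)$ appears to need a genuinely new extremal-hypergraph argument, since Lemma \ref{johnson} with any $l\in\{1,2,3\}$ only gives $O(q^2)$, and the obvious pigeonhole $C(5,q,\cdot)\le q\cdot C(4,q,\cdot)$ combined with the known $C(4,q,\{1,1,1,1\})=o(q^2)$ returns the too-weak $o(q^3)$. A positive resolution for this single boundary case, plus its analogues for odd $N$ at $t=3$ and for each $t\ge 5$, would, together with the easy $r\le t-3$ reduction above, complete the proof of Conjecture \ref{conj2} in full.
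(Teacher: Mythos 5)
You are attempting to prove what is, in the paper, an open conjecture: the authors give no proof of Conjecture \ref{conj2}, and they explicitly report having tried and failed to settle even its first open instance, $C(5,q,\{1,1,1\})=o(q^3)$, by the hypergraph removal lemma, ``since the structure of the hypergraphs needed to be considered is too complicated.'' Your proposal does not close this gap, and to your credit you say so yourself. The parts you do carry out are sound: the base case $N=t$ is exactly the paper's own observation that Theorem \ref{upperbound} settles the conjecture for $N=t\ge 3$, and your reduction for residues $1\le r\le t-3$ via Lemma \ref{johnson} with $l=m$ checks out arithmetically ($N'=m(t-2)+r$ gives $\lc N'/(t-2)\rc=m+1=\lc N/(t-1)\rc$ and $(t-2)\nmid N'$, so $q^m+o(q^{m+1})=o(q^{m+1})$ as required). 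But the entire difficulty is concentrated in the $r=t-2$ branch, where your plan is only a sketch (``mimic the removal argument of Section 3''), and that sketch runs into a concrete obstruction: the cleaning step of Lemma \ref{linearsubfamily} has no known analogue once $N>u$. For $N=\sum_i w_i$, two columns agreeing in two rows force a violation after deleting $O(q)$ special columns, which is what makes the hypergraph linear and lets the graph removal lemma apply; for $N=5$, $t=3$, agreement in two rows is perfectly legal, the hypergraph is not linear, the relevant forbidden witness is no longer a rainbow cycle, and one would need a genuine \emph{hypergraph} removal argument on a far messier family of configurations --- precisely where the authors report failure.

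A second structural point you should make explicit: your induction is more conditional than the phrasing suggests. The ``easy'' sub-case $1\le r\le t-3$ at level $t$ invokes the full conjecture at level $t-1$, whose own hard branch $r=(t-1)-2$ is unproven; e.g.\ $t=4$, $r=1$ reduces to the odd-$N$ cases at $t=3$, which all lie in the hard branch. So nothing beyond $N=t$ is established unconditionally: the proposal is a clean reduction of Conjecture \ref{conj2} to the residue class $N\equiv -1\pmod{t-1}$ (at every level $t$), not a proof. That reduction is a reasonable and possibly useful observation, and it correctly isolates the same obstruction the paper identifies, but as it stands the statement remains a conjecture.
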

\noindent Note that Theorem \ref{upperbound} implies that Conjecture \ref{conj2} is true for $N=t\ge 3$.
One possible strategy to attack this conjecture is to use the hypergraph removal lemma (see for example, \cite{graphremovallemmas}), which is a generalization of the graph removal lemma.
The authors have attempted to use this tool to prove $C(5,q,\{1,1,1\})=o(q^3)$ for sufficiently large $q$.
Unfortunately, we failed to accomplish this since the structure of the hypergraphs needed to be considered is too complicated
(indeed to prove Theorem \ref{upperbound} it suffices to consider linear hypergraphs).

We also have a conjecture for the approximate magnitude of $N(w)$.
\begin{conjecture}\label{con3}
  $\lim_{N\rightarrow\infty}\fr{N(w)}{w^2}=1$.
\end{conjecture}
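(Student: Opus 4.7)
My plan is to reduce Conjecture \ref{con3} to a combination of an explicit construction for the upper bound and the long-standing conjecture of Erd\H{o}s, Frankl and F\"uredi (Conjecture \ref{cff}) for the lower bound. The two asymptotic inequalities $\limsup_{w\to\infty}N(w)/w^2\le 1$ and $\liminf_{w\to\infty}N(w)/w^2\ge 1$ can be attacked independently, so I would split the argument in two.

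For the upper bound, I would invoke the affine plane construction recalled right after Conjecture \ref{cff}: when $w+1$ is a prime power, the affine plane of order $w+1$ supplies a $CFF((w+1)^2;(w+1)^2+(w+1),w)$, and in particular one with $n>N$, so $N^*(w)\le(w+1)^2$. By Lemma \ref{cffandfpc} every $CFF(N;n,w)$ is also an $SHF(N;n,2,\{1,w\})$, hence $N(w)\le(w+1)^2$ whenever $w+1$ is a prime power. For arbitrary $w$, I would invoke a Bertrand-type density result for prime powers to select $q^{*}$ with $w+1\le q^{*}\le(1+o(1))w$, take the CFF of cover-free strength $q^{*}-1$, and use the trivial monotonicity (a $v$-cover-free family is $w$-cover-free for every $w\le v$) to conclude $N(w)\le(q^{*})^{2}\le(1+o(1))w^{2}$.

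For the lower bound, Lemma \ref{fpctightbd} already provides the crucial sandwich $N^{*}(w-2)\le N(w)\le N^{*}(w)$. Under Conjecture \ref{cff} in its weaker asymptotic form $N^{*}(w)/w^{2}\to 1$, one immediately deduces $N(w)\ge N^{*}(w-2)\ge(1-o(1))(w-2)^{2}=(1-o(1))w^{2}$, which closes the gap with the upper bound above and finishes the proof of Conjecture \ref{con3}. Notice that the sandwich in Lemma \ref{fpctightbd} also gives the converse reduction: a proof of Conjecture \ref{con3} would force $N^{*}(w)/w^{2}\to 1$, so the two asymptotic conjectures are in fact equivalent.

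The hard part, unmistakably, is the Erd\H{o}s--Frankl--F\"uredi conjecture itself, which has resisted attack since 1985; the best current record is Lemma \ref{grouptesting}, giving only $N^{*}(w)\ge\frac{15+\sqrt{33}}{24}w^{2}\approx 0.864\,w^{2}$. Consequently, the route through $N^{*}$ sketched above cannot presently push the constant in the lower bound for $N(w)/w^{2}$ past $\frac{15+\sqrt{33}}{24}$. A speculative way to partially bypass this barrier is to exploit structure specific to $\{1,w\}$-separating hash families over a binary alphabet that cover-free families do not see: the two symbols play asymmetric roles, and any row of the matrix simultaneously witnesses separation of both ``$0$-type'' and ``$1$-type'' pattern, whereas a $w$-cover-free condition only records one direction. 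Quantifying this extra flexibility into an improvement strictly beyond $N^{*}(w-2)$, and ideally all the way to $(1-o(1))w^{2}$ matching the construction, is in my assessment exactly where the real work lies.
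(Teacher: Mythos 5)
Your proposal is correct as far as a proposal can be here and follows essentially the same route as the paper: the statement is an open conjecture, and the paper's only accompanying content is precisely your reduction, namely that Lemma \ref{fpctightbd} (the sandwich $N^{*}(w-2)\le N(w)\le N^{*}(w)$), combined with the affine-plane upper bound on $N^{*}(w)$, makes Conjecture \ref{con3} equivalent to the weaker form of Conjecture \ref{cff}. You correctly identify that no unconditional proof exists and that the genuine obstacle is the Erd\H{o}s--Frankl--F\"uredi conjecture itself, for which the best known lower bound remains Lemma \ref{grouptesting}.
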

\noindent Note that Lemma \ref{fpctightbd} implies that this conjecture is equivalent to the weaker form of Conjecture \ref{cff}.
\bibliographystyle{plain}
\bibliography{shf}

\end{document}